\newtheorem*{rep@theorem}{\rep@title}
\newcommand{\newreptheorem}[2]{%
\newenvironment{rep#1}[1]{%
 \def\rep@title{#2 \ref{##1}}%
 \begin{rep@theorem}}%
 {\end{rep@theorem}}}
\newtheorem*{rep@cor}{\rep@title}
\newcommand{\newrepcor}[2]{%
\newenvironment{rep#1}[1]{%
 \def\rep@title{#2 \ref{##1}}%
 \begin{rep@cor}}%
 {\end{rep@cor}}}
\newtheorem*{rep@prop}{\rep@title}
\newcommand{\newrepprop}[2]{%
\newenvironment{rep#1}[1]{%
 \def\rep@title{#2 \ref{##1}}%
 \begin{rep@prop}}%
 {\end{rep@prop}}}
\newtheorem{cor}{Corollary}[section]
\newtheorem{theorem}[cor]{Theorem}
\newtheorem{prop}[cor]{Proposition}
\newtheorem{lemma}[cor]{Lemma}
\theoremstyle{definition}
\newtheorem{defi}[cor]{Definition}
\theoremstyle{remark}
\newtheorem{remark}[cor]{Remark}
\newtheorem*{remark*}{Remark}
\newtheorem*{notation*}{Notation}
\newlist{steps}{enumerate}{1}
\setlist[steps, 1]{itemsep=8pt,leftmargin=0cm,itemindent=.5cm,labelwidth=\itemindent,labelsep=0cm,align=left,label = \textbf{\emph{Step \arabic*}:\,}}
\newcommand{\C}{{\mathbb C}}
\newcommand{\R}{{\mathbb R}}
\newcommand{\Z}{{\mathbb Z}}
\newcommand{\dev}{\mathrm{dev}}
\newcommand{\E}{\mathbb{E}}
\newcommand{\SO}{\mathrm{SO}}
\newcommand{\grad}{\mathrm{grad}}
\newcommand{\Isom}{\mathrm{Isom}}
\begin{document}

\setcounter{secnumdepth}{2}
\setcounter{tocdepth}{2}

\title[Rigidity of minimal Lagrangian diffeomorphisms]{Rigidity of minimal Lagrangian diffeomorphisms \\ between spherical cone surfaces}

\author[Christian El Emam]{Christian El Emam}
\address{Christian El Emam: Universit\'e du Luxembourg, 
Maison du Nombre,
6 Avenue de la Fonte,
L-4364 Esch-sur-Alzette, Luxembourg.} \email{christian.elemam@uni.lu}

\author[Andrea Seppi]{Andrea Seppi}
\address{Andrea Seppi: Institut Fourier, UMR 5582, Laboratoire de Math\'ematiques,
Universit\'e Grenoble Alpes, CS 40700, 38058 Grenoble cedex 9, France.} \email{andrea.seppi@univ-grenoble-alpes.fr}


\thanks{This work has been partially supported by the LabEx PERSYVAL-Lab (ANR-11-LABX-0025-01) funded by the French program Investissements d'avenir. The first author has been partially
	supported by the FNR OPEN grant CoSH (O20/14766753/CoSH). The authors are members of the national research group GNSAGA}

\maketitle

\vspace{-0.6cm}

\begin{abstract}
We prove that any minimal Lagrangian diffeomorphism between two closed spherical surfaces with cone singularities is an isometry, without any assumption on the multiangles of the two surfaces. As an application, we show that every branched immersion of a closed surface of constant positive Gaussian curvature in Euclidean three-space is a branched covering onto a round sphere, thus generalizing the classical rigidity theorem of Liebmann to branched immersions. 
\end{abstract}

\section{Introduction}

Minimal Lagrangian maps have played an important role in the study of hyperbolic structures on surfaces. As observed independently by Labourie \cite{labourieCP} and Schoen \cite{Schoenharmonic}, given two closed hyperbolic surfaces $(\Sigma_1,h_1)$ and $(\Sigma_2,h_2)$, there exists a unique minimal Lagrangian diffeomorphism in the homotopy class of every diffeomorphism $\Sigma_1\to\Sigma_2$.  See also \cite{zbMATH00827487} and \cite{zbMATH00870993,graham} for extensions of this result. Alternative proofs have been provided later, in the context of Anti-de Sitter three-dimensional geometry (see \cite{foliationCMC} and \cite[\S 7]{survey}), using higher codimension mean curvature flow (see \cite{zbMATH01744223} and \cite{zbMATH05996706}). Using Anti-de Sitter geometry, the result of Labourie and Schoen has been generalized in various directions: in \cite{bon_schl,zbMATH06902491} in the setting of universal Teichm\"uller space; in \cite{zbMATH06680278} for closed hyperbolic surfaces with cone singularities of angles in $(0,\pi)$, provided the diffeomorphism $\Sigma_1\to\Sigma_2$ maps cone points to cone points of the same angles. Toulisse then proved in \cite{zbMATH07149810} the existence of minimal maps between closed hyperbolic surfaces of different cone angles, by purely analytic methods. We remark that interesting results in a similar spirit have been obtained for minimal Lagrangian diffeomorphisms between bounded domains in the Euclidean plane (\cite{zbMATH00032290,zbMATH01145655}) and in a complete non-positively curved Riemannian surface (\cite{zbMATH05346543}).

On the other hand, spherical metrics with cone singularities on a closed surfaces have been studied in \cite{zbMATH03988171,zbMATH04073390,zbMATH04142953,zbMATH04194621,zbMATH00126050}. Very recently the works \cite{zbMATH06992339,zbMATH07091755,mondpanov2}, by geometric methods, and  \cite{zbMATH06741332,zbMATH07197533,mazzeozhu2}, by analytic methods, developed the study of the deformations spaces of spherical cone metric, highlighting their complexity. 

\subsection{Main statement}

It thus seems a natural question to ask whether one can find a minimal Lagrangian diffeomorphism between two spherical cone surfaces.  In this paper we answer negatively to this question, without any assumption on the cone angles. We show that two spherical cone surfaces do not admit any minimal Lagrangian diffeomorphism unless they are isometric. When they are isometric, the only minimal Lagrangian diffeomorphisms are isometries. We summarize these statements as follows:

\begin{theorem}\label{thm:main1}
Given two closed spherical cone surfaces $(\Sigma_1,\mathfrak p_1,g_1)$ and $(\Sigma_2,\mathfrak p_2,g_2)$, any minimal Lagrangian diffeomorphism  $\varphi:(\Sigma_1,\mathfrak p_1,g_1)\to(\Sigma_2,\mathfrak p_2,g_2)$ is an isometry.
\end{theorem}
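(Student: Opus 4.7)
The plan is to reduce Theorem~\ref{thm:main1} to a rigidity statement in the spirit of Hilbert's proof of Liebmann's theorem, applied intrinsically to the tensor that encodes the minimal Lagrangian condition.

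First I would translate the hypothesis on $\varphi$ into the existence of a bundle endomorphism $b$ of $T(\Sigma_1\setminus\mathfrak p_1)$ which is $g_1$-self-adjoint, positive definite, satisfies $\varphi^*g_2 = g_1(b\cdot,b\cdot)$, has $\det b = 1$ (because $\varphi$ is area-preserving, hence Lagrangian for the product symplectic form), and obeys the Codazzi equation $d^{\nabla^{g_1}}b=0$ (encoding minimality of the graph). Since $K_{g_1}=1=\det b$ on the smooth locus, these are precisely the Gauss--Codazzi equations for an isometric immersion of $(\Sigma_1\setminus\mathfrak p_1,g_1)$ into $\R^3$ with shape operator $b$; this is the observation that eventually powers the Liebmann-type application.

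Next I would invoke (and, if needed in the cone setting, reprove) an intrinsic version of Hilbert's lemma: on a Riemannian surface $(M,g)$ with constant Gaussian curvature, every $g$-self-adjoint positive-definite Codazzi tensor $b$ with $\det b = K_g$ is umbilical at every local maximum of its largest eigenvalue $\lambda_{\max}(b)$. The argument is purely intrinsic: in a local orthonormal frame diagonalizing $b$, the Codazzi equations give differential identities for $\lambda_{\max}(b)$ and the frame-rotation angle, and combining these with the first- and second-order maximum conditions and the constraint $\det b = K_g$ forces the two eigenvalues to coincide. In our setting, umbilicality plus $\det b = 1$ gives $b = \mathrm{Id}$ at the maximum point, so $\lambda_{\max}(b)=1$ there; but $\lambda_{\max}(b)\geq 1$ pointwise, since $\lambda_{\max}(b)\,\lambda_{\min}(b)=\det b = 1$ and $\lambda_{\max}\geq\lambda_{\min}$. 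Hence $\lambda_{\max}(b)\equiv 1$ throughout the smooth locus and $b\equiv\mathrm{Id}$.

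The main obstacle is to guarantee that $\lambda_{\max}(b)$ actually attains its supremum in the smooth locus $\Sigma_1\setminus\mathfrak p_1$, rather than only asymptotically at a cone point. This requires a local analysis in a uniformizing chart around each $p\in\mathfrak p_1$: using the explicit spherical-cone model metric together with the Codazzi equation, one should show either that $\lambda_{\max}(b)$ extends continuously across $p$ (so that compactness of $\Sigma_1$ delivers an interior smooth maximizer), or, via a blow-up argument, rule out an escape of the eigenvalues to infinity as one approaches $p$. Once this is established, Hilbert's lemma yields $b\equiv\mathrm{Id}$, whence $\varphi^*g_2=g_1$ on the smooth part; continuity of $\varphi$ at the cone points then promotes this identity to an isometry of spherical cone surfaces, proving the theorem.
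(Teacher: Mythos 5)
Your reduction to an intrinsic Hilbert-type lemma is sound on the smooth locus, and it is in fact morally the same computation as the paper's: the paper shows (Lemma \ref{lemma laplace}, Corollary \ref{cor subharmonic}) that $\chi=\tfrac12\log\Lambda$, with $\Lambda=\frac{\lambda-1}{\lambda+1}$ the positive eigenvalue of the auxiliary tensor $B$ of Definition \ref{defi GB}, satisfies $\Delta^G\chi=K_G>0$, which is the differential-inequality form of the pointwise second-order contradiction you extract at an interior maximum of $\lambda_{\max}(b)$. So the interior argument is fine.

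The genuine gap is the step you defer to ``a local analysis in a uniformizing chart'': this is not a routine verification but the actual content of the theorem, and neither of your two proposed remedies works as stated. First, continuity (or even boundedness) of $\lambda_{\max}(b)$ at a cone point is not available a priori: boundedness of $b$ is equivalent to quasiconformality of $\varphi$ there, whereas the hypothesis is only that $\varphi$ extends \emph{continuously}; the paper deliberately avoids ever proving $b$ bounded, by passing to the quantity $\Lambda\in(0,1)$, which is bounded by construction. Second, even if one knew $\lambda_{\max}(b)$ bounded, its supremum could be approached only along a sequence tending to a cone point and never attained at a smooth point, and Hilbert's pointwise argument cannot be run at the singular point. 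To exclude this scenario one needs a removable-singularity/maximum principle for the relevant subharmonic quantity on a punctured neighbourhood, and that in turn requires knowing that the underlying conformal structure at the puncture is that of $\mathbb D^*$ rather than of an annulus $\mathbb A_{r_0}$ (on which bounded subharmonic functions need not attain their sup on the outer boundary). Establishing this conformal-type statement is Proposition \ref{prop:conf type}, whose proof --- realizing $G$ as the first fundamental form of an equivariant immersion in $\R^3$, showing the immersion and its Gauss map converge at the puncture, projecting vertically, and applying the Schwarz Reflection Principle --- is the bulk of the paper and has no counterpart in your outline. Without supplying an argument of comparable strength at the cone points, the proof does not close.
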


We remark that, as part of our definition (Definition \ref{defi min lag}), a minimal Lagrangian diffeomorphism $\varphi$ is a smooth diffeomorphism between $\Sigma_1\setminus\mathfrak p_1$ and $\Sigma_2\setminus\mathfrak p_2$ that extends \emph{continuously} to the cone points. \emph{A priori}, we do not assume that such a smooth map extends \emph{smoothly} to the cone points. This subtlety is at the origin of an important technical point in our proof, which is summarized in Section \ref{sec:outline} below. 
 
\subsection{Surfaces of constant Gauss curvature} 

We provide an application of our main result for branched immersions of surfaces of constant Gaussian curvature in Euclidean three-space, generalizing the classical Liebmann's theorem which states that every closed immersed surface of positive constant Gaussian curvature in Euclidean space is a round sphere. 

In \cite{zbMATH06243957}, G\'alvez, Hauswirth and Mira classified the \emph{isolated singularities} of surfaces of constant Gaussian curvature. According to their definition, isolated singularities of an immersion $\sigma:U\setminus\{p\}\to\R^3$, for $U$ a disc, are those that extend continuously on $U$.  Among these, they considered \emph{extendable singularities}, namely those for which the normal vector extends smoothly at $p$, and showed that they are either \emph{removable}, meaning that they extend to an immersion of $U$, or \emph{branch points}, meaning that the Gauss map is locally expressed as the map $z\mapsto z^k$ with respect to some coordinates on $U$ and on $\mathbb S^2$.  {In the following, we will use the term \emph{branched immersion} of a surface $\Sigma$ to indicate an immersion of the complement of a discrete set $D$ {of isolated singularities} of $\Sigma$, that extends continuously to $D$ and has a branch point at every point of $D$, according to the above definition.} Here we show a rigidity result for branched immersions of closed surfaces:

\begin{cor}\label{cor:rigidity}
Every branched immersion in Euclidean three-space of a closed surface of constant positive Gaussian curvature is a branched covering onto a round sphere.
\end{cor}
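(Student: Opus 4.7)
The plan is to recognize the Gauss map as inducing a minimal Lagrangian diffeomorphism between two spherical cone surfaces on the same underlying topological surface, and then to apply Theorem \ref{thm:main1}. Let $\sigma\colon\Sigma\to\R^3$ be the branched immersion; after rescaling, assume $K=1$. Denote by $D\subset\Sigma$ the finite set of branch points and by $N\colon\Sigma\setminus D\to\Sph^2$ the Gauss map, which extends continuously to $D$ by the very definition of branch point. On $\Sigma\setminus D$ consider the two smooth Riemannian metrics
\[
\I:=\sigma^*\langle\cdot,\cdot\rangle_{\R^3},\qquad \III:=N^*g_{\Sph^2}.
\]
Both have constant curvature $1$; the second metric is well-defined because the shape operator $S$ satisfies $\det S=K=1$, so $N$ is a local diffeomorphism on $\Sigma\setminus D$.

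The first step is to verify that $\I$ and $\III$ extend to bona fide spherical cone metrics on all of $\Sigma$. For $\I$ this is immediate from the continuous extension of $\sigma$ at each $p\in D$. For $\III$ one invokes the G\'alvez--Hauswirth--Mira local model $z\mapsto z^k$ of the Gauss map near $p$: pulling back $g_{\Sph^2}$ under this model produces a spherical cone singularity of angle $2\pi k$ at $p$.

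The decisive step is to show that the identity map $\mathrm{Id}\colon(\Sigma,\I)\to(\Sigma,\III)$ is a minimal Lagrangian diffeomorphism in the sense of Definition \ref{defi min lag}. The self-adjoint operator $b$ defined by $\III(X,Y)=\I(bX,bY)$ is, up to a sign fixed by the normal orientation, the shape operator $S$, because $\III(X,Y)=\I(SX,SY)$. The Lagrangian (area-preserving) condition $\det b=1$ is exactly $K=1$, and the minimality condition $d^{\nabla^{\I}}b=0$ is exactly the Codazzi equation of the immersion. Since $\mathrm{Id}$ trivially extends continuously to $D$, Theorem \ref{thm:main1} applies and forces $\I=\III$. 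Consequently $S$ is both self-adjoint and orthogonal, whence $S=\pm\mathrm{Id}$ pointwise, and $\sigma$ is totally umbilic on $\Sigma\setminus D$.

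The conclusion is then classical. A totally umbilic immersion with $K=1$ in $\R^3$ lies on a unit sphere: the vector $\sigma+N$ (or $\sigma-N$, depending on the orientation convention) has vanishing differential and is therefore locally, hence globally, constant on the connected set $\Sigma\setminus D$, placing $\sigma(\Sigma\setminus D)$ inside a single round unit sphere; by continuity the same holds for $\sigma(\Sigma)$. The restricted map from $\Sigma$ to this round sphere is open (being a local diffeomorphism on $\Sigma\setminus D$ and having the local model $z\mapsto z^k$ at $D$) and closed (by compactness of $\Sigma$), hence surjective, exhibiting $\sigma$ as a branched covering onto a round sphere. The main obstacle I anticipate is the careful verification, near the points of $D$, that $\III$ qualifies as a spherical cone metric in the precise sense of the paper and that $\mathrm{Id}$ satisfies the full definition of minimal Lagrangian diffeomorphism (Definition \ref{defi min lag}) at the cone points; the remaining ingredients are standard.
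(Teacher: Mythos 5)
Your overall strategy coincides with the paper's: the Gauss map $N$ turns the identity of $\Sigma$ into a minimal Lagrangian diffeomorphism from $(\Sigma,\I)$ to $(\Sigma,\III)$, Theorem \ref{thm:main1} forces $\I=\III$, hence $b=\mathbbm 1$, and the immersion is congruent to a branched covering of a round sphere. The identification of the two conditions in Definition \ref{defi min lag} with the Gauss and Codazzi equations, and the endgame ($\sigma+N$ locally constant, openness plus closedness for surjectivity), are all fine.

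The genuine gap is the sentence ``For $\I$ this is immediate from the continuous extension of $\sigma$ at each $p\in D$.'' It is not: continuity of $\sigma$ at a branch point does not by itself yield the local normal form $e^{2f}|z|^{2\alpha-2}|dz|^2$ of Definition \ref{defi cone sing}. A spherical metric on a punctured disc need not be a cone metric --- a priori the conformal type at the puncture could be that of an annulus $\mathbb A_{r_0}$, or the area could be infinite --- and then Theorem \ref{thm:main1}, whose hypotheses require genuine spherical cone metrics on \emph{both} sides, would not apply. This is precisely where the paper's proof of the corollary spends its effort: it shows that near $p$ the vertical projection $\pi\circ\iota$ is bi-Lipschitz onto a planar domain, uses the Schwarz Reflection Principle to rule out the conformal type $\mathbb A_{r_0}$ (so that $(U^*,\I)$ is conformally $\mathbb D^*$), checks that $\I$ has finite area near $p$, and only then invokes the classification of finite-area spherical metrics on $\mathbb D^*$ to conclude that $\I$ has a cone point. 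You have also inverted where the difficulty lies: the metric $\III=N^*g_{\Sph^2}$, which you flag as the delicate one, is the easy case (the local model $z\mapsto z^k$ makes $N$ a local isometry branched of degree $k$, giving a cone angle $2k\pi$ directly), whereas $\I$ is the one requiring the analytic work above. Supplying that argument, essentially Steps 4--6 of the proof of Proposition \ref{prop:conf type} adapted to $\iota$ together with the finite-area observation, is needed to close the proof.
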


As we mentioned, Corollary \ref{cor:rigidity} can be regarded as a generalization of Liebmann's theorem, which we indeed recover by an independent proof when the immersion has no branch points. Roughly speaking, we prove Corollary \ref{cor:rigidity} by applying Theorem \ref{thm:main1} to the Gauss map of a branched immersion $\sigma:\Sigma\to\R^3$, which induces a minimal Lagrangian self-diffeomorphism of $\Sigma$ with respect to the first and third fundamental form, both of which are spherical cone metrics. 

Finally, we remark that the hypothesis that the surface $\Sigma$ is closed is essential in Corollary \ref{cor:rigidity}, as well as the closedness of $\Sigma_1$ and $\Sigma_2$ in Theorem \ref{thm:main1}. Indeed one can find \emph{local} deformations of spheres of constant Gaussian curvature, with branch points (see \cite{zbMATH06599450} for many examples) or without (for instance by surfaces of revolution); their Gauss maps provide non-isometric minimal Lagrangian diffeomorphisms between open spherical surfaces (with or without cone points).

\subsection{Outline of the proof of Theorem \ref{thm:main1}.} \label{sec:outline}
 
A map $\varphi:(\Sigma_1,\mathfrak p_1,g_1)\to(\Sigma_2,\mathfrak p_2,g_2)$ is minimal Lagrangian if it is area-preserving and its graph (restricted to the nonsingular locus) is minimal in the product $\Sigma_1\times\Sigma_2$. A useful characterization  is that one can express (on the nonsingular locus) $\varphi^*g_2=g_1(b\cdot,b\cdot)$ for $b$ a (1,1) tensor which is self-adjoint with respect to $g_1$, positive definite, and satisfies the conditions $d^{\nabla^{g_1}}\!b=0$ and $\det b=1$. For the sake of completeness, we prove the equivalence of the two definitions in Appendix \ref{app:defis}. From this characterization, one sees that minimal Lagrangian maps are those that can be \emph{locally} realized as the Gauss maps of surfaces of constant Gaussian curvature one in Euclidean three-space, as a consequence of the Gauss-Codazzi equations.

Starting by this characterization, using the spherical metric $g_1$ and the (1,1) tensor $b$, we produce a pair $(G,B)$, {defined on the complement of the cone points of $\Sigma_1$} where $G$ is a Riemannian metric on $\Sigma_1$, and $B$ a (1,1) $G$-self-adjoint traceless tensor, satisfying the equations
$$d^{\nabla^G}\!\!B=0\qquad\text{and}\qquad K_G=1+\det B~.$$
Although we will not use spherical three-dimensional geometry in this paper, we remark that these are precisely the Gauss-Codazzi equations for a surface in $\mathbb S^3$, which is minimal since $B$ is traceless. Equivalently, by the Lawson correspondence, the pair $(G,\mathbbm 1+B)$ satisfies the Gauss-Codazzi equations for a surface of constant mean curvature one in $\R^3$. Such constant mean curvature surface is realized (at least locally) as the parallel surface from the surface of constant Gaussian curvature mentioned above, which is determined by the pair $(g_1,b)$.
Assuming $\varphi:(\Sigma_1,\mathfrak p_1,g_1)\to(\Sigma_2,\mathfrak p_2,g_2)$ is a minimal Lagrangian diffeomorphism, the goal of the proof is to show that $B$ vanishes identically, which is equivalent to $\varphi$ being an isometry.

For this purpose, assuming by contradiction that $B$ does not vanish identically, the next step consists in computing the Laplace-Beltrami operator of the function $\chi$ defined, in the complement of the zeros of $B$, as the logarithm of the positive eigenvalue of $B$ (up to a certain constant). It turns out that $\Delta^G\chi$ equals the curvature of the metric $G$, which is positive (Corollary \ref{cor subharmonic}), hence $\chi$ is subharmonic and negative and the contradiction is then obtained by an application of the maximum principle. 

However, it is essential to prove that the metric $G$ has the conformal type of a punctured disc in a neighbourhood of every cone point of $\Sigma_1$. This would be automatically satisfied assuming some additional regularity on the minimal Lagrangian map $\varphi$: for instance, if $\varphi$ is supposed quasiconformal, which is equivalent to boundedness of the (1,1) tensor $b$, then $g_1$ and $G$ are quasiconformal, and therefore both $g_1$ and $G$ have the conformal type of a punctured disc near the cone points. 
But, as we mentioned above, in our Theorem \ref{thm:main1} we assume a weaker regularity on $\varphi$ at the cone points, namely we only suppose that $\varphi$ is continuous. 

To prove that $G$ has the conformal type of a punctured disc around the cone points, we apply the interpretation in terms of surfaces in Euclidean space, and we show that $G$ can be realized in a punctured neighbourhood $U^*$ of any cone point as the metric induced by the first fundamental form of an equivariant immersion of $\widetilde{U^*}$ in $\R^3$. We also prove that the normal vector of the equivariant immersion admits a limit, and the vertical projection induced a bi-Lipschitz equivalence between $G$ and a flat metric on $U^*$. A complex analytic argument, based on Schwarz Reflection Principle, shows that this flat metric has the conformal type of $\mathbb D^*$ at the puncture, and this implies that $G$ has the conformal type of $\mathbb D^*$ as well.

\subsection*{Acknowledgements}
We would like to thank the anonymous referees for several useful comments, and in particular for the comment that led to Remark \ref{referee}.


\tableofcontents

\section{Definitions and setup}

Let us start by introducing the fundamental definitions and some well-known properties.

\subsection{Conical metrics}
 We give the general definition of cone Riemannian metric.
 
\begin{defi} \label{defi cone sing}
Given a smooth surface $\Sigma$ and a discrete subset $\mathfrak p\subset \Sigma$, a \emph{cone} metric on $\Sigma$ is a Riemannian metric $g$ on $\Sigma\setminus \mathfrak p$ such that can be written in a punctured neighbourhood $U\setminus\{p\}$ of every point $p\in \mathfrak p$ as: 
\begin{equation} \label{eq: conformal model}
g=e^{2f}|z|^{2\alpha-2}|dz|^2
\end{equation}
with respect to a coordinate $z:U\to\C$, for $f:U\setminus \{p\}\to\R$ a smooth bounded function and $\alpha\in (0,1)\cup (1,+\infty)$.  
\end{defi}

The subset $\mathfrak p$ is called the \emph{singular locus}, and its complement the \emph{regular locus}. It will be convenient to set $\theta:=2\pi\alpha$, which is called the \emph{cone angle} at $p$.

A cone metric is called  \emph{spherical} when it has constant curvature $+1$ on the regular locus. In this cases one has the following explicit local expression for the metric tensor \eqref{eq: conformal model}:

\begin{equation}\label{eq:spherical cone metric} 
g=\frac{4\alpha^2|z|^{2\alpha-2}}{(1+|z|^{2\alpha})^2}|dz|^2~.
\end{equation}



\subsection{Minimal Lagrangian maps}

Let us now move on to the definition of minimal Lagrangian maps.

\begin{defi} \label{defi min lag}
Given two spherical cone surfaces $(\Sigma_1,\mathfrak p_1,g_1)$ and $(\Sigma_2,\mathfrak p_2,g_2)$, a \emph{minimal Lagrangian diffeomorphism} is a diffeomorphism $\varphi:\Sigma_1\setminus\mathfrak p_1\to\Sigma_2\setminus\mathfrak p_2$, that extends to a homeomorphism between $\Sigma_1$ and $\Sigma_2$, having the property that the unique $g_1$-self-adjoint, positive definite $(1,1)$ tensor $b$ on $\Sigma_1\setminus \mathfrak p_1$ such that $\varphi^*g_2=g_1(b\cdot,b\cdot)$ satisfies the conditions: 
\begin{equation}\label{eq:defi min lag}
\det b=1\qquad\text{and}\qquad d^{\nabla^{g_1}}\!b=0~.
\end{equation}
\end{defi}

Here and in what follows, $\nabla^g$ denotes the Levi-Civita connection of a Riemannian metric $g$. We recall that, for a connection $\nabla$ and a $(1,1)$ tensor $A$, the \emph{exterior derivative} $d^\nabla \!\!A$ is defined as 
$$d^\nabla \!\!A(v,w)=\nabla_v(A(w))-\nabla_w(A(v))-A([v,w])~.$$
A tensor satisfying $d^{\nabla^g}\!\!A=0$ is called \emph{Codazzi tensor} with respect to the metric $g$.

For the sake of completeness, in Appendix \ref{app:defis} we shall prove that Definition \ref{defi min lag} is equivalent to the fact that the graph of $\varphi$ is minimal Lagrangian in $\Sigma_1\times\Sigma_2$, thus justifying the terminology.

\begin{remark}
It is natural to require that a minimal Lagrangian map maps cone points to cone points, as in Definition \ref{defi min lag}. Indeed (as we explain in Remark \ref{rmk extend}), if $\varphi:(U_1,p_1,g_1)\to (U_2,p_2,g_2)$ is minimal Lagrangian diffeomorphism between two punctured discs endowed with metrics of the form \eqref{eq:spherical cone metric}, then the cone angles of $g_1$ and $g_2$ are necessarily equal. In particular, if the ``cone angle'' is $2\pi$ for $g_1$ at $p_1$, meaning that the metric extends to a smooth spherical metric on the disc, then the same holds for $g_2$ at $p_2$, and moreover in this case $\varphi$ extends smoothly to a minimal Lagrangian diffeomorphism between $U_1$ and $U_2$.
\end{remark}

When a metric is written in the expression $g(A\cdot,A\cdot)$ for $A$ an invertible Codazzi tensor with respect to $g$, its connection and curvature are easily related to those of $g$, as in the following well-known lemma.

\begin{lemma}[{\cite[Proposition 3.12]{zbMATH05200423}}] \label{lemma KS}
Let $g$ be a Riemannian metric on a surface $\Sigma$ and let $A$ a smooth $(1,1)$ tensor with $d^{\nabla^g}\!\!A=0$ such that $\det A$ vanishes nowhere. Define $h=g(A\cdot,A\cdot)$. Then the Levi-Civita connections of $g$ and $h$ are related by:
\begin{equation}\label{eq:connection}
\nabla^h_v w=A^{-1}\nabla^g_v(A(w))~,
\end{equation}
and their curvatures by:
\begin{equation}\label{eq:curvature}
K_h=\frac{K_g}{\det A}~.
\end{equation}

\end{lemma}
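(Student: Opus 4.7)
The plan is to prove \eqref{eq:connection} by invoking the uniqueness of the Levi-Civita connection, and then derive \eqref{eq:curvature} as an algebraic consequence.

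For \eqref{eq:connection}, I would define $\tilde{\nabla}_v w := A^{-1}\nabla^g_v(A(w))$ and verify that $\tilde{\nabla}$ is the Levi-Civita connection of $h$. The linearity in $v$ is immediate since $A^{-1}$ is tensorial; the Leibniz rule in $w$ follows from the Leibniz rule for $\nabla^g$ applied to $A(fw)=fA(w)$, since $A^{-1}(v(f)A(w))=v(f)w$. For torsion-freeness,
\[
\tilde{\nabla}_v w-\tilde{\nabla}_w v-[v,w]=A^{-1}\bigl(\nabla^g_v(A(w))-\nabla^g_w(A(v))-A([v,w])\bigr)=A^{-1}(d^{\nabla^g}\!A)(v,w)=0,
\]
where the Codazzi hypothesis is exactly what is needed. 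For compatibility with $h$, one computes
\[
h(\tilde{\nabla}_v w,u)+h(w,\tilde{\nabla}_v u)=g(\nabla^g_v(Aw),Au)+g(Aw,\nabla^g_v(Au))=v\bigl(g(Aw,Au)\bigr)=v(h(w,u)),
\]
using only the metric compatibility of $\nabla^g$ and the definition $h=g(A\cdot,A\cdot)$. Uniqueness then gives $\nabla^h=\tilde{\nabla}$.

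For \eqref{eq:curvature}, substituting \eqref{eq:connection} into the definition of $R^h$ yields, after the $A$'s and $A^{-1}$'s collapse in the iterated covariant derivatives, the tidy formula $R^h(v,w)u=A^{-1}R^g(v,w)(Au)$. In dimension two, for any Riemannian metric $g'$ one has $R^{g'}(v,w,w,v)=K_{g'}\bigl(g'(v,v)g'(w,w)-g'(v,w)^2\bigr)$. Applying this to $h$ and using the previous identity,
\[
K_h\bigl(h(v,v)h(w,w)-h(v,w)^2\bigr)=h(R^h(v,w)w,v)=g\bigl(R^g(v,w)(Aw),Av\bigr).
\]
Expanding the right-hand side with $R^g(v,w)Z=K_g(g(w,Z)v-g(v,Z)w)$ and picking a basis $v=e_1$, $w=e_2$, the numerator becomes $K_g$ times the determinant of the matrix $(g(e_i,Ae_j))_{ij}=G\cdot A$, which equals $K_g\det(G)\det(A)$; while the denominator equals $(\det A)^2\det(G)$ by the analogous Gram-determinant identity $h(v,v)h(w,w)-h(v,w)^2=(\det A)^2(g(v,v)g(w,w)-g(v,w)^2)$. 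Cancellation yields $K_h=K_g/\det A$.

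The only delicate point is the algebraic identification of $g(R^g(v,w)Aw,Av)$ with $K_g\det(A)$ times the $g$-Gram determinant; it is a routine $2\times 2$ matrix computation but must be done without assuming self-adjointness of $A$, and relies crucially on being in dimension two so that the curvature tensor is algebraically determined by $K_g$. Everything else is a direct verification, and the Codazzi hypothesis enters only once, in establishing torsion-freeness.
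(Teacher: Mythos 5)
Your proof is correct and complete: the verification that $A^{-1}\nabla^g_\bullet(A\,\cdot)$ satisfies the Leibniz rule, is torsion-free (this is exactly where the Codazzi hypothesis enters), and is $h$-compatible establishes \eqref{eq:connection} by uniqueness of the Levi-Civita connection, and the identity $R^h(v,w)u=A^{-1}R^g(v,w)(Au)$ together with the Gram-determinant computation $\det\bigl(g(e_i,Ae_j)\bigr)=\det G\det A$ versus $\det\bigl(h(e_i,e_j)\bigr)=(\det A)^2\det G$ gives \eqref{eq:curvature}, with no need for $A$ to be self-adjoint. The paper itself gives no proof of this lemma (it is cited from the literature), so there is nothing to compare against; your argument is the standard one and can stand as is.
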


\begin{remark} \label{rmk:detb}
This lemma has two immediate consequences. First, it turns out that the condition $\det b=1$ is actually redundant in Definition \ref{defi min lag}. Indeed, assuming $d^{\nabla_{g_1}}b=0$, it follows from $K_{g_1}=K_{\varphi^*g_2}=1$ and from Equation \eqref{eq:curvature}  that
$\det b=1$. Second, the inverse of a minimal Lagrangian diffeomorphism is minimal Lagrangian, since one can write $g_1=\varphi^*g_2(b^{-1}\cdot,b^{-1}\cdot)$ and it is easily checked that $b^{-1}$ is self-adjoint and Codazzi for $\varphi^*g_2$, using \eqref{eq:connection}. Hence $\varphi_*b^{-1}:=(d\varphi)\circ b^{-1}\circ (d\varphi)^{-1}$ satisfies the conditions in the Definition \ref{defi min lag} for $\varphi^{-1}:\Sigma_2\to\Sigma_1$.
\end{remark}

\subsection{Defining the pair $(G,B)$}
We now introduce the fundamental construction for our proofs.

\begin{defi} \label{defi GB}
Given a minimal Lagrangian map $\varphi:(\Sigma_1,\mathfrak p_1,g_1)\to (\Sigma_2,\mathfrak p_2,g_2)$, we define on $\Sigma_1\setminus\mathfrak p_1$ a Riemannian metric 
$$G=\frac{1}{4}g_1((\mathbbm 1+b)\cdot,(\mathbbm 1+b)\cdot)~,$$
and a $(1,1)$-tensor
$$B=(\mathbbm 1+b)^{-1}(b-\mathbbm 1)~,$$
for $b$ as in Definition \ref{defi min lag}.
\end{defi}


\begin{remark}\label{rmk inverse 2}
Definition \ref{defi GB} has a symmetry with respect to $g_1$ and $g_2$. More precisely, the metric $G'$ on $\Sigma_2\setminus \mathfrak p_2$ associated to the map $\varphi^{-1}:(\Sigma_2,\mathfrak p_2,g_2)\to (\Sigma_1,\mathfrak p_1,g_1)$, which is again minimal Lagrangian (Remark \ref{rmk:detb}), is isometric to the metric  $G$ on $\Sigma_1\setminus \mathfrak p_1$. 

To see this, we have observed in Remark \ref{rmk:detb} that the (1,1) tensor associated to the minimal Lagrangian map $\varphi^{-1}$ in Definition \ref{defi min lag} is $\varphi_*b^{-1}$. Hence $G'$ is the metric on $\Sigma_2$ defined by $G'=(1/4)g_2((\mathbbm 1+\varphi_*b^{-1})\cdot,(\mathbbm 1+\varphi_*b^{-1})\cdot)$, and one sees immediately that $\varphi^*G'=G$. Similarly, one finds $B'=-\varphi_*B$. 
\end{remark}

It is immediate to check that $B$ is $G$-self-adjoint, since $b$ is $g_1$-self-adjoint. The following lemma is an immediate algebraic computation.

\begin{lemma} \label{prop eigenvalues}
The eigenspaces of $B$ coincide with those of $b$, and if we denote by $\lambda$ and $1/\lambda$ the eigenvalues of $b$, then the eigenvalues of $B$ are
$$\Lambda=\frac{\lambda-1}{1+\lambda}\qquad\text{and}\qquad \Lambda'=\frac{1-\lambda}{1+\lambda}=-\Lambda~.$$
In particular, $B$ is traceless and $|\Lambda|,|\Lambda'|<1$. Finally, at any point  we have $B=0$ if and only if $b=\mathbbm 1$.  
\end{lemma}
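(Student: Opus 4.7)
The proof is a direct algebraic manipulation exploiting the fact that $B$ is a rational function of $b$. Let me outline it.

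First, I would observe that, since $b$ is positive definite, $\mathbbm 1 + b$ is invertible, so $B$ is well-defined. Moreover, $B$ is a polynomial expression in $b$ and $(\mathbbm 1+b)^{-1}$ (both of which commute with $b$), so $B$ commutes with $b$. In particular, any eigenspace of $b$ is invariant under $B$, which gives the coincidence of eigenspaces on a $2$-dimensional fiber where $b$ has two distinct eigenvalues. When $b$ is a multiple of the identity, both eigenspace decompositions are trivial and the statement holds automatically.

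Next, I would carry out the eigenvalue computation pointwise. Since $b$ is $g_1$-self-adjoint and positive definite with $\det b = 1$ (by Remark \ref{rmk:detb}), its eigenvalues can be written as $\lambda$ and $1/\lambda$ for some $\lambda>0$. Applied to an eigenvector $v$ of $b$ with eigenvalue $\mu>0$, the definition gives
\[
Bv = (\mathbbm 1+b)^{-1}(b-\mathbbm 1)v = \frac{\mu-1}{\mu+1}\,v,
\]
so the eigenvalues of $B$ are $\Lambda = (\lambda-1)/(\lambda+1)$ (associated to $\lambda$) and, substituting $\mu=1/\lambda$,
\[
\frac{1/\lambda-1}{1/\lambda+1} = \frac{1-\lambda}{1+\lambda} = -\Lambda,
\]
as claimed. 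In particular $\mathrm{tr}(B) = \Lambda + (-\Lambda) = 0$.

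For the inequality $|\Lambda|<1$, I would just note that $|\lambda-1|<\lambda+1$ for every $\lambda>0$, since both $\lambda-1<\lambda+1$ and $-(\lambda-1)=1-\lambda<1+\lambda$ hold (the second using $\lambda>0$). Finally, $B=0$ at a point is equivalent to $\Lambda=0$, which forces $\lambda=1$, hence $b=\mathbbm 1$ at that point; conversely, $b=\mathbbm 1$ plainly gives $B=0$. There is no serious obstacle here: the statement reduces to one-variable algebra applied to the eigenvalues of $b$, and the only thing to be slightly careful about is handling the degenerate case $\lambda=1$ when one speaks of ``the'' two eigenspaces.
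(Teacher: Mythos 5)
Your proof is correct and is exactly the ``immediate algebraic computation'' the paper alludes to without writing out: since $B$ is a rational function of $b$ it preserves the eigenspaces of $b$, and the eigenvalue formula, tracelessness, the bound $|\Lambda|<1$, and the equivalence $B=0\Leftrightarrow b=\mathbbm 1$ all follow from the one-variable identity $\mu\mapsto(\mu-1)/(\mu+1)$ applied to $\lambda$ and $1/\lambda$. Your remark about the degenerate case $\lambda=1$ is a sensible extra precaution, but there is nothing to add.
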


We observe that the pair $(G,B)$ satisfies the following important properties:

\begin{prop}\label{prop GCS3}
The following Codazzi equations are satisfied by the pair $(G,B)$:
$$d^{\nabla^G}\!\!B=0\qquad~.$$
Moreover, the curvature of $G$ is positive.
\end{prop}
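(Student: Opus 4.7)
The plan is to write $G = g_1(A\cdot,A\cdot)$ for $A := \tfrac{1}{2}(\mathbbm 1 + b)$ and reduce everything to the properties of $b$ with respect to $g_1$ given in Definition \ref{defi min lag}, by applying Lemma \ref{lemma KS}. First I would verify that $A$ is an invertible $g_1$-Codazzi tensor: invertibility follows from the positive definiteness of $b$, and the Codazzi condition on $A$ follows from that on $b$, since the identity is parallel with respect to any connection. Lemma \ref{lemma KS} then yields
$$\nabla^G_v w = A^{-1} \nabla^{g_1}_v(A w)\qquad\text{and}\qquad K_G = \frac{K_{g_1}}{\det A} = \frac{1}{\det A}.$$

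For the Codazzi equation $d^{\nabla^G} B = 0$, the key algebraic observation is
$$AB = \tfrac{1}{2}(\mathbbm 1 + b)(\mathbbm 1 + b)^{-1}(b - \mathbbm 1) = \tfrac{1}{2}(b - \mathbbm 1),$$
which is itself $g_1$-Codazzi (as a constant linear combination of $b$ and $\mathbbm 1$). Expanding the definition of $d^{\nabla^G} B(v,w)$ and substituting the connection formula above, the terms $\nabla^G_v(Bw) - \nabla^G_w(Bv)$ become $A^{-1}\bigl[\nabla^{g_1}_v((AB)w) - \nabla^{g_1}_w((AB)v)\bigr]$, and the Codazzi property of $AB$ with respect to $g_1$ rewrites this as $A^{-1}(AB)[v,w] = B[v,w]$. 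The whole expression then collapses to $B[v,w] - B[v,w] = 0$.

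For positivity of $K_G$, since $\det b = 1$ (by Remark \ref{rmk:detb}) and $b$ is positive definite, its eigenvalues are $\lambda$ and $1/\lambda$ for some $\lambda > 0$. Hence $\det A = \tfrac{1}{4}(1+\lambda)(1+1/\lambda) = (1+\lambda)^2/(4\lambda) > 0$, yielding $K_G = 4\lambda/(1+\lambda)^2 > 0$, which by AM--GM is bounded above by $1$ with equality iff $b = \mathbbm 1$. I do not expect a serious obstacle here: the whole proof is a short algebraic manipulation, and the role of the factor $\tfrac{1}{2}$ in the definition of $G$ is precisely to make $AB = \tfrac{1}{2}(b-\mathbbm 1)$ Codazzi with the clean cancellation that produces $d^{\nabla^G} B = 0$.
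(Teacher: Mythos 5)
Your proof is correct and follows essentially the same route as the paper: both apply Lemma \ref{lemma KS} to $A=\tfrac12(\mathbbm 1+b)$, use the fact that $AB=\tfrac12(b-\mathbbm 1)$ is $g_1$-Codazzi to get $d^{\nabla^G}\!B=0$, and obtain $K_G=K_{g_1}/\det A=4/(2+\mathrm{tr}\,b)>0$ from the curvature formula. The only cosmetic difference is that the paper writes the Codazzi cancellation in one line as $d^{\nabla^G}\!B=(\mathbbm 1+b)^{-1}d^{\nabla^{g_1}}(b-\mathbbm 1)=0$, while you expand it term by term.
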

\begin{proof}
This is a straightforward verification using Lemma \ref{lemma KS}. Indeed, by Equation \eqref{eq:connection} 
$$d^{\nabla^G}\!\!B=(\mathbbm 1+b)^{-1}d^{\nabla^{g_1}}\!(b-\mathbbm 1)=0$$
since both $\mathbbm 1$ and $b$ are $g_1$-Codazzi. For the curvature condition, using Equation \eqref{eq:curvature} we have
$$K_G=\frac{4K_{g_1}}{\det(\mathbbm 1+b)}=\frac{4}{2+\mathrm{tr}b}>0~.$$
This concludes the proof.
\end{proof}

\begin{remark} \label{rmk lawson}
A simple computation shows moreover that 
$$1+\det B=1+\frac{2-\mathrm{tr}b}{2+\mathrm{tr}b}=\frac{4}{2+\mathrm{tr}b}=K_G~.$$
In other words, together with Proposition \ref{prop GCS3}, we see that the pair $(G,B)$ satisfies the Gauss-Codazzi equation when the ambient manifold is $\mathbb S^3$. However, in this paper we will not use spherical geometry; the construction of Section \ref{sec:surfaces} is motivated by the observation that $(G,\mathbbm 1+B)$ satisfy the Gauss-Codazzi equations in Euclidean space $\E^3$. 

This is the so-called \emph{Lawson correspondence} introduced in \cite{zbMATH03327055}: as a consequence of $\mathrm{tr} B=0$, the Gauss equation $K_G=1+\det B$ is equivalent to $K_G=\det(\mathbbm 1+B)$, namely the Gauss equation in $\E^3$; furthermore $\mathbbm 1+B$ is clearly Codazzi with respect to $G$. In summary, when lifted to the universal cover of $\Sigma_1\setminus\mathfrak p_1$, $(G,B)$ provide the immersion data of a minimal surface in $\mathbb S^3$, while $(G,\mathbbm 1+B)$ those of a constant mean curvature one surface in $\mathbb E^3$.  
\end{remark}

\section{A maximum principle}

The key idea in the proof of Theorem \ref{thm:main1} is an application of the maximum principle to show that $B$ is identically zero, that is $b$ is the identity operator (by Lemma \ref{prop eigenvalues}). This will show that any minimal Lagrangian map $\varphi:(\Sigma_1,\mathfrak p_1,g_1)\to (\Sigma_2,\mathfrak p_2,g_2)$ is an isometry.

\subsection{A bounded subharmonic function}

The fundamental relation involved in our application of the maximum principle is a consequence of the following formula, presented in \cite[Lemma 3.11]{zbMATH05200423}. Since this is a fundamental step, we provide a quick proof for convenience of the reader. In Appendix \ref{app:alternative} we give another short proof, entirely based on the fact that $G(B\cdot,\cdot)$ is the real part of a holomorphic quadratic differential. 

\begin{lemma}\label{lemma laplace}
Let $G$ be a Riemannian metric on a surface $U$ and $B$ a traceless $G$-self-adjoint, $G$-Codazzi $(1,1)$ tensor that does not vanish on $U$. Denote $\chi=(1/4)\log(-\det B)$. Then
$$K_G=\Delta^G\chi~.$$
\end{lemma}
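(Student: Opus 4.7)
The plan is to work in local isothermal coordinates for $G$, so that the identity becomes a plain computation with the flat Laplacian, and to exploit the classical fact mentioned in Appendix \ref{app:alternative}: the $(2,0)$-part of the symmetric bilinear form $G(B\cdot,\cdot)$ is a holomorphic quadratic differential whenever $B$ is traceless, $G$-self-adjoint, and $G$-Codazzi. Since the identity is local, it is enough to prove it on any coordinate patch.

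Concretely, I would first choose a conformal parameter $z=x+iy$ in which $G=e^{2u}|dz|^2$ for a smooth real function $u$. Self-adjointness and vanishing trace force the matrix of $B$ in the coordinate frame $(\partial_x,\partial_y)$ to be of the form $\left(\begin{smallmatrix}A&C\\ C&-A\end{smallmatrix}\right)$ for smooth real-valued functions $A$ and $C$ on the coordinate patch. I then introduce the complex function $\phi:=e^{2u}(A-iC)/2$, which is exactly the coefficient of $dz^2$ in the decomposition of the symmetric tensor $G(B\cdot,\cdot)$ into its $(2,0)$, $(1,1)$ and $(0,2)$ components.

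The next step is to verify that $d^{\nabla^G}\!B=0$ is equivalent to the Cauchy--Riemann equations $\bar\partial\phi=0$. This is a standard computation: the two scalar components of the Codazzi equation in an isothermal frame, after plugging in the Christoffel symbols of $e^{2u}|dz|^2$ and using that $B$ is traceless and self-adjoint, reduce precisely to $\partial_x(\mathrm{Re}\,\phi)=\partial_y(\mathrm{Im}\,\phi)$ and $\partial_y(\mathrm{Re}\,\phi)=-\partial_x(\mathrm{Im}\,\phi)$. Since $B$ does not vanish on $U$ by assumption, we get $\phi\neq 0$ pointwise on the patch, so $\log|\phi|$ is locally the real part of a branch of $\log\phi$ and hence harmonic.

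Finally, a direct algebraic computation gives $-\det B=A^2+C^2=4|\phi|^2 e^{-4u}$, so
\[
\chi=\tfrac{1}{4}\log(-\det B)=\tfrac{1}{2}\log|\phi|-u+\mathrm{const}.
\]
Applying the flat Laplacian $\Delta=\partial_x^2+\partial_y^2$ kills the $\log|\phi|$ term and leaves $\Delta\chi=-\Delta u$. Using the standard conformal identities $\Delta^G=e^{-2u}\Delta$ and $K_G=-e^{-2u}\Delta u$, one concludes $\Delta^G\chi=-e^{-2u}\Delta u=K_G$, which is the desired identity. The step I expect to be the most delicate is the translation of the Codazzi equation into holomorphicity of $\phi$: the computation is routine but requires careful bookkeeping of the Christoffel symbols of a conformal metric. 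An alternative would be to compute $\Delta^G\chi$ intrinsically from $\nabla^G B$, using that $\chi$ is (up to a constant) the logarithm of the positive eigenvalue of $B$ and that the eigenframe is parallel modulo Codazzi terms; that is essentially the route of \cite{zbMATH05200423}. Either way, the statement reduces to harmonicity of $\log|\phi|$ combined with the conformal scaling of the Laplacian and of the Gaussian curvature.
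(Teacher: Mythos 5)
Your argument is correct and is essentially the same as the paper's own alternative proof in Appendix \ref{app:alternative}: both reduce the statement to the harmonicity of $\log|\phi|$ for the nonvanishing holomorphic coefficient $\phi$ of the quadratic differential $G(B\cdot,\cdot)$ (Proposition \ref{prop hopf}), combined with the conformal scaling identities $\Delta^G=e^{-2u}\Delta$ and $K_G=-e^{-2u}\Delta u$. The only difference from the proof given in the main text (which works intrinsically with an orthonormal eigenframe of $B$ and the connection form) is cosmetic, and your one unproved step --- that the Codazzi equation is equivalent to $\bar\partial\phi=0$ --- is exactly the classical fact the paper itself invokes as Proposition \ref{prop hopf}.
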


Here we denote by $\Delta^G$ the Laplace-Beltrami operator of $G$, with negative spectrum. 

\begin{proof}
Let $e,e'$ be an oriented orthonormal frame of eigenvectors of $B$, so that $B(e)=\Lambda e$ and $B(e')=\Lambda' e'=-\Lambda e'$. Let us denote by $\omega$ the  connection form associated to the Levi-Civita connection $\nabla^G$ of $G$, which satisfies 
\begin{equation}\label{referee2}
\nabla_v e=\omega(v)e'\quad\text{ and }\quad\nabla_v e'=-\omega(v)e~,
\end{equation} 
where to simplify the notation we set $\nabla=\nabla^G$. Since $B$ does not vanish on $U$ by hypothesis, we can assume moreover that $\Lambda$ is the positive eigenvalue of $B$, so that $\chi=(1/4)\log(\Lambda^2)=(1/2)
\log\Lambda$. 

First, let us compute the Codazzi condition applied to the frame $\{e,e'\}$:
\begin{align*}
0&=\nabla_eB(e')-\nabla_{e'}B(e)-B(\nabla_e e'-\nabla_{e'} e)\\
&=-(\partial_e\Lambda) e'-(\partial_{e'}\Lambda) e-2\Lambda\nabla_e e'-2\Lambda\nabla_{e'} e~,
\end{align*}
where we have used Equation \eqref{referee2} from the first to the second line.
Hence we get $\partial_e\Lambda=-2\Lambda\omega(e')$ and $\partial_{e'}\Lambda=2\Lambda\omega(e)$. In terms of $\chi=(1/2)\log \Lambda$, we have
$\partial_e\chi=-\omega(e')$ and $\partial_{e'}\chi=\omega(e)$. 

Second, we compute
\begin{align*}
K_G&=-d\omega(e,e')=-\partial_e\omega(e')+\partial_{e'}\omega(e)+\omega(\nabla_e e'-\nabla_{e'} e)\\
&=\partial_e\partial_e\chi+\partial_{e'}\partial_{e'}\chi-\omega((\partial_e \chi) e')
-\omega((\partial_{e'}\chi) e)\\
&=\mathrm{Hess}\chi(e,e)+\mathrm{Hess}\chi(e',e')=\Delta^G\chi~,
\end{align*}
where from the second to the third line we used 
\begin{align*}
\mathrm{Hess}\chi(e,e)&=\partial_e\partial_e\chi-\partial_{\nabla_ee}\chi =\partial_e\partial_e\chi-\partial_{\omega(e)e'}\chi \\
&=\partial_e\partial_e\chi-\omega((\partial_{e'}\chi)e)=\partial_e\partial_e\chi-\omega(\omega(e)e) \\
&=\partial_e\partial_e\chi+\omega(\nabla_ee')~,
\end{align*}
and similarly for $\mathrm{Hess}\chi(e',e')$.
\end{proof}

Proposition \ref{prop GCS3} and Lemma \ref{lemma laplace} show that, on the subset of $\Sigma_1\setminus\mathfrak p_1$ where $B\neq 0$, 
$$\Delta^G\chi> 0~.$$
We have thus shown that $\chi$ is subharmonic. Moreover $\chi$ is negative, because  $|\Lambda|<1$ by Lemma \ref{prop eigenvalues}. We summarize these facts as follows:

\begin{cor}\label{cor subharmonic}
The function $\chi=(1/2)\log|\Lambda|$ is negative and satisfies $\Delta^G\chi>0$ on the complement of the zeros of $B$ in $\Sigma_1\setminus\mathfrak p_1$.
\end{cor}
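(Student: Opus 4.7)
The plan is that this corollary is essentially a direct consequence of the three immediately preceding results, so the proposal amounts to verifying their joint applicability and reconciling the two expressions for $\chi$.

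First I would match the $\chi$ in the statement with the $\chi$ appearing in Lemma \ref{lemma laplace}. Since by Lemma \ref{prop eigenvalues} the tensor $B$ is traceless with eigenvalues $\Lambda$ and $-\Lambda$, we have $\det B=-\Lambda^2$, so on the open subset where $B\neq 0$ the quantity $-\det B=\Lambda^2$ is strictly positive and
\[
\tfrac{1}{4}\log(-\det B)=\tfrac{1}{4}\log(\Lambda^2)=\tfrac{1}{2}\log|\Lambda|.
\]
Hence the function $\chi$ in the corollary coincides on the complement of the zeros of $B$ with the $\chi$ to which Lemma \ref{lemma laplace} applies. Note also that $B$ is $G$-self-adjoint, traceless, and Codazzi with respect to $G$ (the Codazzi property being the content of Proposition \ref{prop GCS3}), so the hypotheses of Lemma \ref{lemma laplace} are indeed satisfied on this open set.

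Next I would combine the two sign statements. Applying Lemma \ref{lemma laplace} gives $\Delta^G\chi=K_G$, and Proposition \ref{prop GCS3} asserts that $K_G>0$; together these yield $\Delta^G\chi>0$ wherever $B\neq 0$. For the negativity of $\chi$ itself, I would invoke Lemma \ref{prop eigenvalues} once more: it guarantees $|\Lambda|<1$ on all of $\Sigma_1\setminus\mathfrak p_1$, and therefore $\chi=(1/2)\log|\Lambda|<0$ on the subset where $\Lambda\neq 0$.

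I do not anticipate any genuine obstacle here; the statement is a packaging of Lemma \ref{prop eigenvalues}, Proposition \ref{prop GCS3}, and Lemma \ref{lemma laplace}. The only point worth flagging explicitly is that the domain of definition of $\chi$ is precisely $\{B\neq 0\}\subset \Sigma_1\setminus\mathfrak p_1$, since the logarithm requires $|\Lambda|>0$, which is exactly why the statement restricts to the complement of the zeros of $B$.
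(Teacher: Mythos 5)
Your proposal is correct and follows exactly the paper's own (very short) argument: identify $\chi=(1/4)\log(-\det B)=(1/2)\log|\Lambda|$ using $\det B=-\Lambda^2$, apply Lemma \ref{lemma laplace} (whose hypotheses hold by Proposition \ref{prop GCS3} and Lemma \ref{prop eigenvalues}) together with $K_G>0$ to get subharmonicity, and use $|\Lambda|<1$ for negativity. Nothing is missing.
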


\begin{remark}
Although not essential in the proof, we remark that given a smooth $(1,1)$ tensor $A$, $A$ is $g$-self-adjoint and traceless if and only if $g(A\cdot,\cdot)$ is the real part of a holomorphic quadratic differential (\cite{zbMATH03064601}, see Proposition \ref{prop hopf}). Hence either $B\equiv 0$ or $B$ vanishes on a discrete subset of $\Sigma_1\setminus\mathfrak p_1$.
\end{remark}

\subsection{Proof of Theorem \ref{thm:main1}}

The main idea of the proof is to apply a maximum principle argument to the function $\chi$ of the previous section. To control the behaviour of $\chi$ at the singularities, we need the following statement on the conformal type of the metric $G$. Its proof is postponed to Section \ref{sec:surfaces}.

\begin{prop}\label{prop:conf type}\label{lemma realize min lag}
Let $U_i$ be a disc endowed with a spherical metric $g_i$ with a cone point at $p_i$, for $i=1,2$, let $\varphi:(U_1,p_1,g_1)\to (U_2,p_2,g_2)$ be a minimal Lagrangian diffeomorphism, and let $b$ the $(1,1)$ tensor as in Definition \ref{defi min lag}.
Then the conformal structure induced by the metric $G=(1/4)g_1((\mathbbm 1+b)\cdot,(\mathbbm 1+b)\cdot)$ on a neighbourhood of $p_1$ is biholomorphic to $\mathbb D^*$.
\end{prop}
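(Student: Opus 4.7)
The plan follows the outline in Section \ref{sec:outline}. First, I would exploit the Lawson correspondence highlighted in Remark \ref{rmk lawson}: since the pair $(G,\mathbbm 1+B)$ satisfies the Gauss--Codazzi equations in $\E^3$, the fundamental theorem of surfaces provides an isometric immersion $\sigma:\widetilde{U_1\setminus\{p_1\}}\to\E^3$ whose first fundamental form is the pullback of $G$ and whose shape operator is $\mathbbm 1+B$. The group of deck transformations of the universal cover acts on $\widetilde{U_1\setminus\{p_1\}}$ and, by uniqueness of surfaces in $\E^3$ with prescribed first and second fundamental forms, $\sigma$ is equivariant with respect to a representation into the group of rigid motions of $\E^3$. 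Note that the eigenvalues of $\mathbbm 1+B$ are $1\pm\Lambda$, which by Lemma \ref{prop eigenvalues} lie in the interval $(0,2)$; in particular $\sigma$ is a constant mean curvature one immersion with bounded principal curvatures.

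The heart of the argument is to show that the Gauss map of $\sigma$ admits a limit at the cone point, after which the projection to the tangent plane of this limit direction provides the bi-Lipschitz equivalence with a flat metric. For the limit, I would use the fact that the third fundamental form $\III=G((\mathbbm 1+B)\cdot,(\mathbbm 1+B)\cdot)$ has a simple expression in terms of $g_1$ and $b$ that can be compared with the spherical metric $g_2$ pulled back through $\varphi$; combined with the continuous extension of $\varphi$ at the cone point and the boundedness of the principal curvatures, this should force the Gauss map to extend continuously at every end of the universal cover lying over $p_1$. Equivariance together with continuity of $\varphi$ at $p_1$ will then force the holonomy representation to fix the limit normal vector, so the normal direction is a well-defined limit on the quotient $U_1\setminus\{p_1\}$ as one approaches $p_1$.

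Once the limit normal $N_0\in\Sph^2$ is established, I would project $\sigma$ orthogonally to the plane $N_0^\perp$ and show, using that $\langle N,N_0\rangle\to 1$ at the puncture and that the shape operator is uniformly bounded, that for a sufficiently small punctured neighbourhood of $p_1$ the projection is a local diffeomorphism whose differential is uniformly bi-Lipschitz equivalent to the identity. Pulling back the flat metric of $N_0^\perp$ through this projection yields a flat Riemannian metric $G_0$ on $U_1\setminus\{p_1\}$ bi-Lipschitz equivalent to $G$, so $G_0$ and $G$ define the same conformal structure near $p_1$.

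The last step, which I expect to be the most delicate, is to show that the conformal structure of $G_0$ is that of $\mathbb D^*$. I would lift $G_0$ to a punctured neighbourhood of the puncture in a conformal chart of the underlying smooth structure, where it becomes a flat metric with an isolated singularity; a Schwarz Reflection Principle argument along a sequence of nested annuli (or equivalently, the classification of flat metrics with an isolated singularity on a disc, developed through developing maps into $\C$) forces the singularity to be of cone type, with some cone angle $\theta_0>0$. Any such cone metric is conformally $\mathbb D^*$ via the map $z\mapsto z^{2\pi/\theta_0}$, and the bi-Lipschitz comparison with $G$ concludes the proof. The main obstacle throughout is that $\varphi$ is only assumed continuous at $p_1$, so all the estimates on $N$, $\sigma$, and the projection have to be derived from continuity and the intrinsic curvature/Codazzi structure rather than from \emph{a priori} regularity of $b$.
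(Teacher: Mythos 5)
Your overall architecture matches the paper's: realize $(G,\mathbbm 1+B)$ as an equivariant immersion in $\E^3$, show the normal has a limit at the puncture, project to the plane orthogonal to the limit normal to obtain a bi-Lipschitz flat comparison metric, and then determine the conformal type. Two steps, however, contain genuine gaps.

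The main one: you never establish that the \emph{immersion itself} (not just its Gauss map) converges to a single point of $\E^3$ along sequences tending to $p_1$, and this is indispensable for the last step --- it is what makes the developing map of the flat comparison metric extend with a constant value at the ideal boundary, which is the input for any Schwarz reflection argument. You cannot extract this from the first fundamental form $G$, whose behaviour at the puncture is exactly what is unknown: bounded principal curvatures give $G\geq \tfrac14\,\varphi^*g_2$ (since the eigenvalues of $\mathbbm 1+B$ lie in $(0,2)$) but no upper bound, because $(\mathbbm 1+B)^{-1}$ may degenerate as $\Lambda\to 1$; so a sequence that is Cauchy for the spherical cone metric need not be Cauchy for $G$. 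The paper's resolution is precisely here: integrating the Codazzi tensor via Lemma \ref{lemma codazzi} produces an \emph{explicit} parallel $K=1$ immersion $\sigma$ whose first fundamental form is the spherical cone metric itself, so $\sigma$ is Cauchy in $\R^3$ near the puncture, and the CMC immersion is the explicit affine combination $\varsigma=\tfrac12(\sigma+\dev)$, which therefore inherits the limit. This is the step where the merely continuous regularity of $\varphi$ at the cone point actually bites, and your sketch (which invokes the fundamental theorem of surfaces abstractly and thus loses the formula relating the two parallel immersions) passes over it.

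Secondly, your final step is partly circular: appealing to ``the classification of flat metrics with an isolated singularity on a disc'' presupposes that the conformal type near the puncture is that of $\mathbb D^*$, which is the statement to be proved; a priori the flat comparison metric could be conformally an annulus $\mathbb A_{r_0}$, in which case there is no isolated singularity to classify. The paper does not show the flat metric has a cone point (that requires further work, cf.\ Remark \ref{rmk same cone angles}); it only rules out the annulus type, by lifting a putative biholomorphism $\mathbb A_{r_0}\to (U_1^*,H)$ to the strip, noting that the composition with $\pi\circ\varsigma$ is holomorphic and extends continuously to the boundary line with constant value (here the limit of the immersion is used), and concluding by Schwarz reflection that it is constant, contradicting immersivity. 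Your ``nested annuli'' reflection argument could likely be made to work, but only once the missing limit of the immersion is supplied.
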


Assuming Proposition \ref{prop:conf type}, we now conclude the proof of Theorem \ref{thm:main1}. 

\begin{proof}[Proof of Theorem \ref{thm:main1}]
Assume by contradiction that $B$ does not vanish identically. Recall that the function $\chi=(1/4)\log(-\det B)$ is negative and subharmonic by Corollary \ref{cor subharmonic}. We can extend $\chi$ to a function on $\Sigma_1\setminus \mathfrak p_1$, with values in $[-\infty,+\infty)$. 

Now, pick a cone point $p\in\mathfrak p_1$ and a neighbourhood $U$ of $p$ in which the metric $G$ is biholomorphic to $\mathbb D^*$. We claim that 
\begin{equation}\label{eq:max on bdy}
\sup_U \chi=\max_{\partial U}\chi~.
\end{equation}
This will conclude the proof, since it implies that $\chi$ has a maximum point in $\Sigma_1\setminus \mathfrak p_1$, and this contradicts Corollary \ref{cor subharmonic}.

Let us prove \eqref{eq:max on bdy}. Let us pick a biholomorphic chart $z:U\to\mathbb D^*$, and consider $\chi$ as a function of $z$. The metric $G$ is {smooth in the complement of $p$, and is expressed in the $z$-coordinate on $\mathbb D^*$ as $e^{2f}|dz|^2$.} {(We remark that we are not making any assumption on the behaviour of $f$ close to $0$. We will see in Remark \ref{rmk same cone angles} that $G$ has a cone point in $p$, which gives information on the behaviour of $f$ as in Definition \ref{defi cone sing}. However, we do not need to use this fact here.)} 

Hence the Laplace-Beltrami operator $\Delta^G$ equals $e^{-2f}\Delta$, where $\Delta$ is the flat Laplacian on the disc. By Corollary \ref{cor subharmonic}, this implies $\Delta\chi>0$ on $\mathbb D^*$. Now choose any $\epsilon>0$. The function $\chi_\epsilon(z):=\chi(z)+\epsilon\log|z|$ still satisfies $\Delta\chi_\epsilon>0$ because $\log|z|$ is harmonic, and coincides with $\chi$ on $\mathbb S^1$. Moreover $\chi_\epsilon$ tends to $-\infty$ at $0$, because $\chi$ is bounded above. By the maximum principle, $\chi_\epsilon$ cannot have an interior maximum point, hence 
$$\chi_\epsilon(z)\leq \max_{\mathbb S^1}\chi_\epsilon=\max_{\mathbb S^1}\chi$$
for any $z\in \mathbb D^*$. It follows that 
$$\chi(z)\leq \max_{\mathbb S^1}\chi-\epsilon\log|z|~.$$
Since $\epsilon$ was chosen arbitrarily, this shows \eqref{eq:max on bdy} and concludes the proof.
\end{proof}

\begin{remark}\label{referee}
An anonymous referee remarked that, in order to prove \eqref{eq:max on bdy}, one can apply the result that a bounded subharmonic function $\chi$ on $\mathbb D^*$ extends to a subharmonic function on $\mathbb D$ (see \cite[Theorem 5.18]{zbMATH03652704}), and then conclude by the maximum principle applied to the extension of $\chi$.
We have preferred to stick to the  more elementary argument  for \eqref{eq:max on bdy} in the proof of Theorem \ref{thm:main1}.
\end{remark}

\section{Immersions in Euclidean space}\label{sec:surfaces}


In order to complete the proof of Theorem \ref{thm:main1} it only remains to prove Proposition \ref{prop:conf type}. We then prove Corollary \ref{cor:rigidity}. The relation between minimal Lagrangian diffeomorphisms and immersions in Euclidean space will play an essential role for both results.

\subsection{Proof of Proposition \ref{prop:conf type}}

The guiding idea towards Proposition \ref{lemma realize min lag} is that, given the tensor $b$ as in Definition \ref{defi min lag}, the pair $(g_1,b)$ represents locally the embedding data of an immersed surface of constant Gaussian curvature one in Euclidean space, by the fundamental theorem of surfaces; moreover, there is a parallel constant mean curvature surface whose first fundamental form is $G$ up to a factor (and whose shape operator is $\mathbbm 1+B$, compare Remark \ref{rmk lawson}). However, since $\Sigma_1$ is not simply connected, we will need to lift $(g_1,b)$ to its universal cover, and refine this approach in order to deal with the equivariance of the obtained immersion. Moreover, in the proof we find convenient to switch the roles of $g_1$ and $g_2$, namely we apply the above guiding idea to $\varphi^{-1}$, see Remark \ref{rmk inverse 2}.

We will apply the following result.

\begin{lemma}[{\cite{zbMATH03682587},\cite[Proposition 1.3.3]{zbMATH03791189}}] \label{lemma codazzi}
Given a simply connected Riemannian manifold $(M,g)$ of constant sectional curvature $K$ and a self-adjoint $(1,1)$ tensor $A$ satisfying the Codazzi equation $d^{\nabla^g}\!A=0$, there exists a smooth function $u:M\to\R$ such that 
$$A=\nabla^g_\bullet\grad^gu+Ku\mathbbm 1~.$$
\end{lemma}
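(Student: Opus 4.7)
The plan is to reduce to a concrete model space and realise $u$ as a ``support function'' associated to a map into the ambient linear space. Since $(M,g)$ is simply connected of constant curvature $K$, the developing map immerses $M$ isometrically into the model space: a sphere of radius $1/\sqrt K$ in $\R^{n+1}$ for $K>0$, Euclidean space $\R^n$ for $K=0$, or a hyperboloid in Minkowski space $\R^{n,1}$ for $K<0$. I will describe the case $K>0$ in detail; the others are analogous.

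Identifying $M$ with its image in $\R^{n+1}$, I consider the $\R^{n+1}$-valued 1-form $\alpha$ on $M$ defined by $\alpha(X):=A(X)$, where $A(X)\in T_xM$ is viewed as a vector in $\R^{n+1}$. Using the Gauss formula $\nabla^{\R^{n+1}}_X Y=\nabla^g_X Y - K\,g(X,Y)\,x$ for tangent fields and the torsion-freeness of $\nabla^g$, a direct computation gives
$$d\alpha(X,Y)=(d^{\nabla^g}\!A)(X,Y)+K\bigl(g(Y,A(X))-g(X,A(Y))\bigr)x~,$$
and both summands vanish: the first by the Codazzi hypothesis, the second by self-adjointness of $A$. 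Simple connectedness of $M$ then produces a smooth map $\sigma:M\to\R^{n+1}$ with $d\sigma=\alpha$. I define $u(x):=\langle\sigma(x),x\rangle$; since $A(X)$ is tangent to the sphere (hence orthogonal to $x$), differentiating gives $du(X)=\langle\sigma,X\rangle$, which identifies $\grad^g u$ with the tangential component of $\sigma$ and yields the decomposition $\sigma=Ku\,x+\grad^g u$.

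Differentiating this decomposition once more and applying the Gauss formula, the radial contributions cancel, and I recover $A(X)=d\sigma(X)=Ku\,X+\nabla^g_X\grad^g u$, which is exactly the formula $A=\nabla^g_\bullet\grad^g u+Ku\,\mathbbm 1$. The case $K<0$ is handled identically after replacing $\R^{n+1}$ with $\R^{n,1}$ and the Euclidean position vector with its Minkowski analogue; the case $K=0$ is simpler, since flatness of the ambient connection reduces closedness of $\alpha$ to the Codazzi condition alone, and then self-adjointness of $A$ forces the primitive $\sigma$ (now viewed as a tangent vector field) to itself be a $g$-gradient $\grad^g u$. The heart of the argument is the closedness of $\alpha$: this is precisely the step where Codazzi kills the tangential contribution while self-adjointness of $A$ kills the normal contribution coming from the Gauss formula, so that both hypotheses on $A$ are indispensable.
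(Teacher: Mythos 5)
The paper offers no proof of this lemma --- it is imported from the cited references --- so there is nothing internal to compare against; judged on its own, your argument is correct. The central computation is right: for the radius-$1/\sqrt K$ sphere the Gauss formula contributes exactly the normal term $K\bigl(g(Y,A(X))-g(X,A(Y))\bigr)x$ to $d\alpha$, which self-adjointness kills, while the Codazzi hypothesis kills the tangential term, so the vector-valued $1$-form $\alpha$ is closed and integrates over the simply connected $M$ to a primitive $\sigma$ with $d\sigma=A$; the support function $u=\langle\sigma,x\rangle$ then satisfies $du(X)=\langle\sigma,X\rangle$ (since $A(X)\perp x$), whence $\sigma=Ku\,x+\grad^gu$, and differentiating this decomposition does make the radial terms cancel and returns $A=\nabla^g_\bullet\grad^gu+Ku\,\mathbbm 1$. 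The flat and hyperbolic cases are as routine as you claim. One presentational caveat: for a possibly incomplete simply connected constant-curvature manifold the developing map is only a local isometry, not an embedding, so you should not literally ``identify $M$ with its image''; instead carry $\dev$ through the formulas (replace the position vector $x$ by $\dev(x)$ and $A(X)$ by $\dev_*(A(X))$), exactly as this paper does with developing maps elsewhere --- every step of your computation is local and survives unchanged. It is also worth observing that your construction is precisely the inverse of the way the lemma is used in Step 1 of the proof of Proposition \ref{lemma realize min lag}: there one starts from $u$ and defines $\sigma=\dev_*(\grad^{\widetilde g_1}u)+u\,\dev$, computing $D_v\sigma=\widetilde b(v)$; you integrate the tensor to recover $\sigma$ and then read off $u$ as its support function.
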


We observe that the term $\nabla^g_\bullet\grad^gu$ is the Hessian of $u$ as a $(1,1)$ tensor, i.e. $\mathrm{Hess}^g u(v,w)=g(\nabla^g_v\grad^gu,w)$.

\begin{proof}[Proof of Proposition \ref{lemma realize min lag}]
To simplify the notation, let us denote $U_i^*:=U_i\setminus\{p_i\}$. 
Lift $g_1$ and $b$ to the universal cover $\widetilde{U_1^*}$. Then $\widetilde b$ still satisfies the Codazzi equation with respect to $\widetilde g_1$. By Lemma \ref{lemma codazzi} there exists a function $u:\widetilde {U_1^*}\to\R$ such that
$$\widetilde b=\nabla^{\widetilde g_1}_\bullet\grad^{\widetilde g_1} u+u\mathbbm 1~.$$
Pick also a developing map $\dev:\widetilde {U_1^*}\to\mathbb S^2\subset\E^3$ for the spherical structure of $g_1$ on $U_1^*$, namely $\dev$ is a local isometry with respect to the metric $\widetilde g_1$ on $\widetilde {U_1^*}$. We define the maps $\sigma,\varsigma:\widetilde {U_1^*}\to\E^3$ by
$$\sigma(x)=\dev_*(\grad^{\widetilde g_1}u)+u\dev(x) \qquad \varsigma(x)=\frac{1}{2}\left(\dev_*(\grad^{\widetilde g_1}u)+(u+1)\dev(x)\right)~.$$
Observe that $\varsigma=(1/2)(\sigma+\dev)$.

\begin{steps}
\item Let us show  that the first fundamental forms of $\sigma$ and $ \varsigma$  are the lifts to the universal cover of $\varphi^*g_2$ and $G$, respectively. 
Being a local statement, we can isometrically identify an open neighbourhood of any point of $\widetilde {U_1^*}$ with a subset of $\mathbb S^2$, so that $\dev$ is the identity. By differentiating $\sigma$ with this identification, we see that
\begin{align*}
D_v \sigma_x&=D_v\grad^{\mathbb S^2}\!\! u+\langle \grad^{\mathbb S^2}\!\!u,v\rangle x+uv \\
&=\nabla^{\mathbb S^2}_v\!\grad^{\mathbb S^2}\!\! u+uv=\widetilde b(v)~,
\end{align*}
where we used $D$ to denote the ambient derivative in $\E^3$, $\langle\cdot,\cdot\rangle$ the metric of $\E^3$ (which restricts to the metric of $\mathbb S^2$), and from the first to the second line we applied the fact that the second fundamental form of $\mathbb S^2$ equals $-\langle\cdot,\cdot\rangle$ with respect to the outer unit normal. This computation has several consequences, namely:
\begin{enumerate}
\item The map $ \sigma$ is an immersion, since its differential is nonsingular. 
\item The normal vector of $ \sigma$ at a point $x\in \widetilde{U_1^*}$ is $\dev(x)$. Indeed, in the above identification, $x$ itself is orthogonal to the image of the differential of $\sigma$ at $x$. In other words, the Gauss map of $ \sigma$ is $\dev$.
\item The first fundamental form of $ \sigma$ equals
$\langle D_v\sigma_x,D_w\sigma_x\rangle=\widetilde g_1( \widetilde b(v),\widetilde b(w))=\widetilde{\varphi^*g_2}(v,w)$.
\item The shape operator of $ \sigma$ is $\widetilde b^{-1}$, since (performing again the computation locally) the normal vector is $N(x)=x$, hence its derivative is $dN(v)=v$ and this equals $\widetilde b^{-1}$ applied to $D_v\sigma_x$.
\end{enumerate} 
The computation for $\varsigma$ is completely analogous, implying (under the same identification as above):
\begin{enumerate}
\item[(1')] Its differential equals $(1/2)(\mathbbm 1+\widetilde b)$ and is nonsingular.
\item[(2')] Its Gauss map is again $\dev$.
\item[(3')] Its first fundamental form is $(1/4)\widetilde g_1((\mathbbm 1+ \widetilde b)\cdot,(\mathbbm 1+ \widetilde b)\cdot)=\widetilde G$.
\end{enumerate} 

\item The immersions $\sigma$ and $ \varsigma$ are equivariant with respect to a representation $\rho$ of $\pi_1(U_1^*)\cong\mathbb Z$ into the isometry group of $\E^3$. Indeed, by construction $\widetilde g_1$ and $\widetilde b$ are preserved by the action of $\mathbb Z$ by deck transformations of $\widetilde{U_1^*}$, hence so are the first fundamental form and the shape operator of $ \sigma$, by the items (3) and (4) of the list above. By the uniqueness part of the fundamental theorem of surfaces, there exists a representation 
$$\rho:\mathbb Z\to\Isom(\R^3)\cong\SO(3)\ltimes\R^3$$ such that $\sigma\circ \gamma=\rho(\gamma)\circ\sigma$ for every $\gamma\in\mathbb Z$. 

Now, observe that the developing map $\dev:\widetilde{U_1^*}\to\mathbb S^2$ is also equivariant with respect to a rotation of angle $\theta_1$ (modulo $2\pi$), as a consequence of the definition of spherical cone metric. That is, if we denote by $\gamma_1$ the standard generator of $\pi_1(U_1^*)$, then 
$$\dev(\gamma_1\cdot \widetilde q)=R_{\theta_1}(\dev(\widetilde q))$$
for every $\widetilde q\in \widetilde{U_1^*}$.
Up to composing $\sigma$ and $\varsigma$ with an element of $\SO(3)$, we can assume that $R_{\theta_1}$ is the rotation fixing $(0,0,1)$. 
By item (2) above, the Gauss map of $\sigma$ coincides with $\dev$, hence the linear part of $\rho$ is the holonomy of $\dev$. Concretely, we have for every $\widetilde q\in \widetilde{U_1^*}$.
$$\sigma(\gamma_1\cdot \widetilde q)=R_{\theta_1}(\sigma(\widetilde q))+\tau$$
for some $\tau\in \E^3$. Since $\varsigma=(1/2)(\sigma+\dev)$, it turns out that $\varsigma$ satisfies the equivariance:
$$\varsigma(\gamma_1\cdot \widetilde q)=R_{\theta_1}(\sigma(\widetilde q))+\frac{1}{2}\tau~.$$
As a consequence of the next step, we will see that, up to composing $\sigma$ with a translation, we can assume $\tau=0$.

\item In this step we will show that, roughly speaking, the immersions $\sigma$ and $\varsigma$ admit a limit in correspondence of the puncture of $U_1^*$. Let us denote by $\mathcal D$ a fundamental domain for the action of $\pi_1(U_1^*)$ on $\widetilde{U_1^*}$.  We now claim that there exists a point $\xi\in\E^3$ having the property that $\sigma(\widetilde q_n)\to \xi$ for every sequence $\widetilde q_n\in\mathcal D$ such that $\Pi(\widetilde q_n)\to p_1$, where $\Pi:\widetilde{U_1^*}\to U_1^*$ is the covering projection.  
To see this, recall that the first fundamental form of $\sigma$ is  the lift to the universal cover of the spherical cone metric $\varphi^*g_2$. Let us first fix \emph{one} sequence $\widetilde q_n$ as above. Since the metric completion of $(U_1^*,\varphi^*g_2)$ is obtained by adding the cone point $p_1$, $\widetilde q_n$ is a Cauchy sequence for the first fundamental form of $\sigma$, which is $\widetilde{\varphi^*g_2}$. Hence $\sigma(\widetilde q_n)$ is a Cauchy sequence in $\R^3$, and it converges. Let us call its limit point $\xi$. 

Now pick any other sequence $\widetilde q_n'$ contained in $\mathcal D$ such that $\Pi(\widetilde q_n')$ converges to $p_1$. The distance between $\widetilde q_n$ and $\widetilde q_n'$ for the first fundamental form of $\sigma$ tends to zero. Hence also the Euclidean distance $\| \sigma(\widetilde q_n)- \sigma(\widetilde q'_n)\|$ tends to zero, and therefore the limit of $ \sigma(\widetilde q'_n)$ is $\xi$ again.  One can in fact repeat the same argument only assuming that $\widetilde q_n'$ is contained in the union 
$$\bigcup_{i\in I} \gamma_i\cdot\mathcal D$$
for $I$ a finite subset of $\pi_1(U_1^*)\cong\mathbb Z$. This observation also shows that the representation $\rho$ introduced in the previous step fixes $\xi$, by applying the above argument to $\widetilde q_n'=\rho(\gamma)\widetilde q_n$. Up to composing with a translation, we will assume $\xi=0$, which shows that $\rho$ is a linear representation,  or in other words, $\tau=0$ in the previous step.

We also obtain an analogous property for $\varsigma$, namely that for any sequence $\widetilde q_n\in\mathcal D$ such that $\Pi(\widetilde q_n)\to p_1$, $\varsigma (\widetilde q_n)$ converges to a point in the axis fixed by $R_{\theta_1}$.  But in this case the proof does \emph{not} follow from the same argument. Indeed we do not know that the metric $G$ on $U_1^*$ has a cone singularity at $p_1$, hence we cannot repeat the above argument wordly. (This is indeed the reason why so far we dealt with $\sigma$ and $\varsigma$ simultaneously, although we are only interested in the final statement for $\varsigma$.) 
Nevertheless, since $\varsigma=(1/2)(\sigma+\dev)$ and $\dev$ converges to $(0,0,1)$ on any sequence $\widetilde q_n$ as above, the conclusion for $\varsigma$ follows immediately, and the limit of $\varsigma$ is in fact $(0,0,1)$ under our assumptions.

\item Let us now consider the vertical projection $\pi:\R^3\to\R^2$, namely $\pi(x,y,z)=(x,y)$. 
A fundamental consequence of item (2') is that the Gauss map of  $\varsigma$ (which coincides with $\dev$) tends to $(0,0,1)$ along any sequence $\widetilde q_n$ such that $\Pi(\widetilde q_n)\to p_1$. It follows that, up to restricting $U_1$, we can assume that 
\begin{equation}\label{eq:assumption dev}
\langle\dev(q),(0,0,1)\rangle>\epsilon> 0~.
\end{equation} 
In other words, the normal vector of $\varsigma$ is never horizontal. 
This implies that $\pi\circ\varsigma:\widetilde{U_1^*}\to\R^2$ is an immersion. Since $\varsigma$ is equivariant with respect to the representation $\rho$ sending a generator to the rotation $R_{\theta_1}$ around the vertical axis, also $\pi\circ\varsigma$ is equivariant with a representation, which with a little abuse of notation we still denote by $\rho$, sending the same generator to the rotation in $\R^2$ of angle $\theta_1$ (modulo $2\pi$). By this equivariance, the first fundamental forms of the immersions $\varsigma$ and $\pi\circ\varsigma$ induce two Riemannian metrics on $U_1^*$: the metric induced by $\varsigma$ is $G$ as a consequence of item (3') in Step 1; the metric induced by $\pi\circ\varsigma$, which we call $H$, is flat. The two metrics $G$ and $H$ are bi-Lipschitz, i.e. there exists a constant $C$ such that
\begin{equation}\label{eq:assumption bilip}
\frac{1}{C}H(v,w)\leq G(v,w)\leq CH(v,w)~,
\end{equation} 
for all $v,w$ tangent to $U_1^*$. Indeed, it follows from \eqref{eq:assumption dev} that that is there exists a constant $C>1$ such that
$({1}/{C}){G(v,v)}\leq ||d\pi\circ d\varsigma(v)||^2\leq {G(v,v)}$ 
for any vector $v$ tangent to $\widetilde{U_1^*}$. 
 \item We now claim that the conformal structure on $U_1^*$ induced by the metric $H$ is biholomorphic to $\mathbb D^*$ around the point $p_1$. Recall that any conformal structure on $U_1^*$ is biholomorphic to $\mathbb D^*$, $\C^*$ or $\mathbb A_{r_0}=\{z\in\C\,|\,1<|z|<r_0\}$ for some $r_0>1$. Let us show that $(U_1^*,H)$ cannot be biholomorphic to any $\mathbb A_{r_0}$. Up to restricting the neighbourhood $U_1^*$, we then rule out the case $\C^*$ and conclude the claim.
 
 Suppose by contradiction that there exists a biholomorphism $\psi:\mathbb A_{r_0}\to (U_1^*,H)$. Clearly $\psi$ extends to one of the two boundary components, having limit $p_1$ therein; it is harmless to assume that such boundary component  is $\{|z|=1\}$. Considering the (holomorphic) universal covering map $z\mapsto \exp({-iz})$ of $\mathbb A_{r_0}$, defined on
 \begin{equation}\label{eq:univ cover annulus}
 \widetilde{\mathbb A_{r_0}}:=\{z\in\C\,|\,0<\mathrm{Im}(z)<\log(r_0)\}~,
 \end{equation}
  we can lift $\psi$ to a biholomorphism $\widetilde\psi:\widetilde{\mathbb A_{r_0}}\to (\widetilde{U_1^*},\widetilde H)$, where by construction $\widetilde H$ is the pull-back metric of the immersion  $\pi\circ\varsigma:\widetilde{U_1^*}\to\R^2$. Hence $f:=\pi\circ\varsigma\circ\widetilde\psi:\widetilde{\mathbb A_{r_0}}\to\R^2\cong \C$ is a holomorphic map. In Steps 2 and 3 we showed that $\varsigma(\widetilde q_n)$ tends to  the origin for any sequence $\widetilde q_n\in \widetilde{U_1^*}$ such that $\Pi(\widetilde q_n)\to p_1$. Together with our assumption on $\psi$, it follows that $f$ extends continuously to the real line $\{\mathrm{Im}(z)=0\}$, which is mapped to $0\in\C$. 
 
 Now pick any point $z_0$ on the real line, and pick any $0<\epsilon_0<\log(r_0)$. By Schwarz Reflection Principle, we can extend $f$ on $\widetilde{\mathbb A_{r_0}}\cap B(z_0,\epsilon_0)$ to a holomorphic map $F:B(z_0,\epsilon_0)\to\C$, which is defined on $\{\mathrm{Im}(z)<0\}\cap B(z_0,\epsilon_0)$ by $F(z)=\overline{f(\overline z)}$. Since $f$ maps the real line to $0\in \C$, the holomorphic extension $F$ is constant. But this contradicts the fact that $\pi\circ\varsigma$ is an immersion. 
  
 \item Finally, we can conclude that $G$ has also the conformal type of  $\mathbb D^*$. Indeed, the identity $\mathrm{id}:(U_1^*,G)\to (U_1^*,H)$ is bi-Lipschitz by \eqref{eq:assumption bilip}, hence it is quasiconformal. Since $H$ is biholomorphic to  $\mathbb D^*$ by the previous step, this implies that $(U_1^*,G)$ cannot have the conformal type of $\mathbb A_{r_0}$ or $\C^*$ and is therefore conformal to $\mathbb D^*$, see for instance \cite[\S 4.3]{zbMATH05042912}
\qedhere 
\end{steps}
\end{proof}

\subsection{Proof of Corollary \ref{cor:rigidity}}

Having proved Proposition \ref{prop:conf type}, our proof of Theorem \ref{thm:main1} is complete. We now provide the proof of Corollary \ref{cor:rigidity}.

\begin{proof}[Proof of Corollary \ref{cor:rigidity}]
Suppose $\iota:\Sigma_1\to\E^3$ is a branched immersion, where $\mathfrak p_1$ is the (discrete) branching set, and let $\nu:\Sigma_1\setminus\mathfrak p_1\to\mathbb S^2$ be its Gauss map. It follows from the definition of branched immersion and the compactness of $\Sigma_1$ that $\nu$ extends to a branched covering $\Sigma_1\to\mathbb S^2$. Up to {postcomposition with a homothety of $\E^3$}, we can assume without loss of generality that the value of the constant curvature of $\iota$ is equal to one; we will show that $\iota$ is a branched covering onto a round sphere of radius one centered at some point of $\R^3$.

Let $g_1$ be the first fundamental form of $\iota$ and $b$ its shape operator. Since $b=-d\nu$, we see immediately that $\nu^*g_{\mathbb S^2}=g_1(b\cdot,b\cdot)$, where of course $g_{\mathbb S^2}$ denotes the spherical metric. The $(1,1)$ tensor $b$ is $g_1$-self-adjoint and positive definite (up to changing the sign of $\nu$). Moreover, by the Gauss-Codazzi equations, $b$ satisfies $\det b=1$ and $d^{\nabla^{g_1}}\!b=0$. This shows that $\mathrm{id}:(\Sigma_1,\mathfrak p_1,g_1)\to (\Sigma_1,\mathfrak p_1,\nu^*g_{\mathbb S^2})$ is a minimal Lagrangian diffeomorphism.

We claim that both $g_1$ and $\nu^*g_{\mathbb S^2}$, which are clearly spherical metrics on $\Sigma_1\setminus \mathfrak p_1$, have cone singularities at the points of $\mathfrak p_1$. For $\nu^*g_{\mathbb S^2}$, this is clear since $\nu$ is a local isometry for $\nu^*g_{\mathbb S^2}$ and behaves, in a neighbourhood of any $p\in\mathfrak p_1$, like a degree $d$ covering onto a punctured disc in   $\mathbb S^2$. Hence $\nu^*g_{\mathbb S^2}$ has a cone point of cone angle $2d\pi$. 

For $g_1$, pick any point $p\in\mathfrak p_1$. We know from the definition that both $\iota$ and $\nu$ admit a limit point at $p$. We can assume that the limit of $\iota$ is the origin of $\R^3$, and the limit of $\nu$ is $(0,0,1)$. We can now repeat Steps 4, 5 and 6 of the proof of Proposition \ref{lemma realize min lag} to show that $g_1$ has the conformal type of a punctured disc around $p$. 
Let us briefly summarize these steps for easiness of the reader.  
As in Step 4, for $q$ in a small neighbourhood $U$ of $p$, we have $\langle\nu(q),(0,0,1)\rangle>\epsilon>0$. Hence $\pi\circ \iota$ is a locally bi-Lipschitz immersion with respect to the first fundamental form $g_1$ of $\iota$ and the flat metric of $\R^2$. More precisely $\mathrm{id}:(U,g_1)\to (U,h_1)$, where $h_1$ is the pull-back metric of $\pi\circ \iota$, is bi-Lipschitz. Repeating Step 5, we show that $(U,h_1)$ is biholomorphic to $\mathbb D^*$ around $p$ (up to restricting $U$ to rule out the case $\C^*$): indeed, by lifting a biholomorphism $\psi:{\mathbb A_{r_0}}\to(U,h_1)$ to the universal cover \eqref{eq:univ cover annulus} and applying Schwarz Reflection Principle to $\pi\circ\iota\circ\widetilde\psi$, one would obtain a contradiction. As in Step 6, one then shows that $g_1$ has the conformal type of $\mathbb D^*$ as well. 

Moreover $g_1$ has finite area around $p$. Indeed $h_1$ has finite area: to see this, let $\psi:\mathbb D^*\to (U^*,h_1)$ be a biholomorphism, and consider the holomorphic map $\pi\circ \iota\circ\psi:\mathbb D^*\to\R^2$, that extends to a holomorphic map on $\mathbb D$, hence has finite degree if restricted to a compact neighbourhood of $0$. Hence the area of $h_1$ is finite around $p$, and since $g_1$ is bi-Lipschitz to $h_1$, it has finite area as well. Then we can apply \cite[Proposition 4]{zbMATH04034587} and deduce that $g_1$ has a cone point at $p$, because it is a spherical metric, it is biholomorphic to $\mathbb D^*$ in a neighbourhood of $p$  and has finite area.

Having showed that $g_1$ and $\nu^*g_{\mathbb S^2}$ are spherical cone metrics, it follows from Theorem \ref{thm:main1} that $\mathrm{id}:(\Sigma_1,\mathfrak p_1,g_1)\to (\Sigma_1,\mathfrak p_1,\nu^*g_{\mathbb S^2})$ is an isometry, i.e. $g_1=\nu^*g_{\mathbb S^2}$. Hence $b$ is the identity operator. This means that, considering $\nu$ as an immersion in $\R^3$, its first fundamental form is $g_1$. Moreover its shape operator is the identity, since its image is a subset of $\mathbb S^2$. Hence $\iota$ and $\nu$ are immersions of $\Sigma_1\setminus\mathfrak p_1$ having the same first fundamental form (namely $g_1$) and the same shape operator (namely the identity). By the uniqueness part of the fundamental theorem of surfaces, there exists an isometry $A$ of $\R^3$ such that $\iota=A\circ\nu$. Since $\nu$ is a branched covering of $\mathbb S^2$, this implies that $\iota$ is a branched covering of some round sphere. 
\end{proof}

We observe that in the proof of Corollary \ref{cor:rigidity} we are making use of Theorem \ref{thm:main1} in full strength, namely under the sole assumption that the minimal Lagrangian diffeomorphism $\varphi$ extends \emph{continuously} at the cone points. Indeed, by the definition of branch point for spherical surfaces, the Gauss  map $\nu$ of the immersion $\iota$ is supposed to extend {continuously} at the branch points, without any additional regularity assumption. 

\subsection{Final remarks}
In conclusion, we would like to add some related remarks. 

\begin{remark}
Let $(G,B)$ be the pair constructed in Definition \ref{defi GB}. Recalling  that the (1,1) tensor $B$ is bounded (Lemma \ref{prop eigenvalues}), and $G(B\cdot,\cdot)$ is the real part of a holomorphic quadratic differential $q$ for the conformal structure induced by $G$ (see Proposition \ref{prop hopf}), a simple computation shows that if $\alpha\in (0,1/2]$ (i.e. cone angle in $(0,\pi]$), then $q$ has at most simple poles at $p$. If the cone angle is in $(\pi,2\pi]$ then $q$ does not have a pole, and if the cone angle is in $(k\pi,(k+1)\pi])$ for $k\geq 2$ then $q$ has a zero of order at least $k-1$.
\end{remark}

\begin{remark}\label{rmk same cone angles}
With some more technicalities, similar to those pursued in \cite[\S 4]{zbMATH06561539} in a similar Lorentzian setting, one can strengthen the arguments in the proof of Proposition \ref{lemma realize min lag} and show that the metric $G$ has a cone point of angle $\theta_1$ at $p$. Moreover the immersion $\varsigma$ induces an embedding of a punctured disc in a \emph{singular Euclidean space}, namely the singular Riemannian manifold
$$|z|^{2\alpha_1-2}|dz|^2+dt^2~,$$
which is the product of the standard flat metric on $\R^2$ with cone angle $\theta_1=2\pi\alpha_1$ and $\R$.  This induced embedding is orthogonal to the singular locus $\{z=0\}$ and its first fundamental form of is precisely $G$. 
We stress that this follows from a local analysis, that is under the sole assumption that $\varphi:(U_1,p_1,g_1)\to (U_2,p_2,g_2)$ is a minimal Lagrangian diffeomorphism between discs endowed with spherical cone metrics.
\end{remark}

\begin{remark}\label{rmk extend}
A consequence of Remark \ref{rmk same cone angles} is that, if $\varphi:(U_1,p_1,g_1)\to (U_2,p_2,g_2)$ is a minimal Lagrangian diffeomorphism between discs endowed with spherical cone metrics, then the cone angles of $g_1$ and $g_2$ at $p_1$ and $p_2$ are necessarily equal. Indeed, Remark \ref{rmk same cone angles} shows that the metric $G$ has a cone point of angle equal $\theta_1$ at a point $p_1$; applying the same remark  to $\varphi^{-1}$ (Remark \ref{rmk inverse 2}), one shows that this cone angle equals also $\theta_2$.


This statement holds as well when the ``cone angle'' equals $2\pi$, in the following sense. Suppose $\varphi$ is a minimal Lagrangian diffeomorphism between $(U_1\setminus\{p_1\},g_1)$ and $(U_2\setminus\{p_2\},g_2)$ where  \emph{at least one} between $g_1$ and $g_2$ extends to a nonsingular spherical metric on the whole $U_i$. Then \emph{both} $g_1$ \emph{and} $g_2$ are nonsingular spherical metrics on $U_1$ and $U_2$. 

In this setting, the arguments of the proof of Proposition \ref{lemma realize min lag} (up to switching $\varphi$ and $\varphi^{-1}$, which does not affect the conclusion) can be adapted to show that $\varphi$ can be realized  as the Gauss map of an embedded surface of constant Gaussian curvature one in $\R^3$, and moreover the embedding extends continuously to $p_1$. Classical regularity for Monge-Amp\`ere equations then implies (again only by a local analysis) that  the embedding extend smoothly at $p_1$. Therefore the minimal Lagrangian map $\varphi$ extends smoothly to a minimal Lagrangian diffeomorphism between $U_1$ and $U_2$. 
\end{remark}

\begin{remark}
As mentioned in the end of the introduction, our Theorem \ref{thm:main1} and Corollary \ref{cor:rigidity} (unlike Remarks \ref{rmk same cone angles} and \ref{rmk extend}) cannot be improved to purely local statements, and necessarily require some topological assumption, for instance closedness of the surface. Indeed, examples of non-isometric minimal Lagrangian diffeomorphisms between domains of $\mathbb S^2$ can be found as the Gauss maps of surfaces of constant Gaussian curvature in $\R^3$, for instance as surfaces of revolution. There are also many examples of branched immersions of (necessarily non-closed) surfaces with constant Gaussian curvature one, whose image is not contained in a round sphere (see for instance \cite{zbMATH06599450}). Following the proof of Corollary \ref{cor:rigidity}, their Gauss maps induce minimal Lagrangian diffeomorphisms between open spherical surfaces with cone angles $2n\pi$.
\end{remark}

\appendix

\section{Equivalent definitions}\label{app:defis}

In this appendix we show that Definition \ref{defi min lag} is equivalent to the condition that the graph of $\varphi$ is a minimal Lagrangian submanifold in the product $\Sigma_1\times\Sigma_2$, endowed with the Riemannian metric $g_1\oplus g_2$ and with the symplectic form $\pi_1^*dA_{g_1}-\pi_2^*dA_{g_2}$. This is in fact a local computation.

\begin{prop}\label{prop:app1}
Let $(U_1,g_1)$ and $(U_2,g_2)$ be spherical surfaces, $\varphi:U_1\to U_2$ a diffeomorphism, and $b$ the unique positive definite, $g_1$-self-adjoint $(1,1)$ tensor on $U_1$ such that $\varphi^*g_2=g_1(b\cdot,b\cdot)$. Then the graph of $\varphi$ is minimal Lagrangian if and only if $d^{\nabla^{g_1}}\!b=0$ and $\det b=1$. 
\end{prop}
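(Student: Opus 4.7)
The plan is to verify the two conditions separately, assuming throughout that $\varphi$ is orientation-preserving (otherwise $\psi^*\omega = (1+\det b)\,dA_{g_1}$ never vanishes and the graph is never Lagrangian).

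\textbf{Lagrangian condition.} Parametrizing $\Gamma_\varphi$ by $\psi(x) = (x, \varphi(x))$, I compute
\[
\psi^*\omega = dA_{g_1} - \varphi^* dA_{g_2} = (1 - \det b)\, dA_{g_1},
\]
where the last equality uses $\varphi^* g_2 = g_1(b\cdot, b\cdot)$ together with positivity of $b$ to give $\varphi^* dA_{g_2} = \det(b)\, dA_{g_1}$. Hence $\Gamma_\varphi$ is Lagrangian if and only if $\det b = 1$.

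\textbf{Setup for minimality.} Assuming $\det b = 1$, at each point I diagonalize $b$ in a $g_1$-orthonormal frame $\{e_1, e_2\}$ with $b e_i = \lambda_i e_i$, $\lambda_1\lambda_2 = 1$. Setting $f_i := \lambda_i^{-1}\, d\varphi(e_i)$ gives a $g_2$-orthonormal frame along $\varphi$. Then
\[
E_i := \frac{(e_i, \lambda_i f_i)}{\sqrt{1+\lambda_i^2}}, \qquad N_i := \frac{(\lambda_i e_i, -f_i)}{\sqrt{1+\lambda_i^2}}
\]
are orthonormal frames for $T\Gamma_\varphi$ and $N\Gamma_\varphi$ respectively, as one checks from the identity $g_1(\lambda_i e_i, e_j) + g_2(-f_i, \lambda_j f_j) = (\lambda_i - \lambda_j)\delta_{ij} = 0$ and the analogous ones.

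\textbf{Minimality vs.\ Codazzi.} The mean curvature vector is $H = \sum_i (\overline\nabla_{E_i} E_i)^\perp$, with $\overline\nabla = \nabla^{g_1}\oplus\nabla^{g_2}$. Expanding $\overline\nabla_{E_i}E_i$ produces three kinds of terms: derivatives $\partial_{e_i}\lambda$, the connection form of $\{e_j\}$ for $\nabla^{g_1}$, and the connection form of $\{f_j\}$ for $\nabla^{g_2}$ along $\varphi$. I would convert the last of these into data on $U_1$ by invoking Lemma \ref{lemma KS}: since $\varphi^* g_2 = g_1(b\cdot, b\cdot)$ with $b$ Codazzi, $\nabla^{\varphi^*g_2}_v w = b^{-1}\nabla^{g_1}_v(bw)$, and the frame identity $d\varphi(e_i) = \lambda_i f_i$ expresses the $\{f_j\}$-connection form in terms of that of $\{e_j\}$ plus derivatives of $\lambda$. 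Projecting onto $N_1, N_2$ and summing over $i$, the two components of $H$ end up being proportional to the two components of the vector $d^{\nabla^{g_1}}b(e_1, e_2)$. In dimension two, the self-adjointness of $b$ forces the full tensor $d^{\nabla^{g_1}}b$ to be encoded by this single vector, so $H \equiv 0$ if and only if $d^{\nabla^{g_1}}b = 0$.

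\textbf{Main obstacle.} The main technical hurdle is the bookkeeping in the mean curvature computation: one must carefully track derivatives of the eigenvalue $\lambda$ alongside two distinct connection forms, using repeatedly that $\lambda_1\lambda_2 = 1$. A conceptually cleaner alternative is to equip $(U_1\times U_2, g_1\oplus g_2)$ with the compatible complex structure $\tilde J = J_1\oplus(-J_2)$, for which $\omega = (g_1\oplus g_2)(\tilde J\cdot,\cdot)$ turns the product into a K\"ahler-Einstein 4-manifold of Einstein constant $+1$; there the minimality of a Lagrangian graph is governed by the vanishing of the closed mean curvature 1-form $\omega(H,\cdot)|_{\Gamma_\varphi}$, which one can translate directly into the Codazzi condition on $b$. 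Either route leads to the desired equivalence, but the direct frame calculation is the most self-contained.
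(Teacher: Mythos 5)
Your treatment of the Lagrangian condition is correct and coincides with the paper's (both reduce it to $\varphi^*dA_{g_2}=\det(b)\,dA_{g_1}$), and your adapted frames $E_i$, $N_i$ are set up correctly. The gap is in the minimality step. You propose to rewrite the $\nabla^{g_2}$-connection form of $\{f_j\}$ ``by invoking Lemma \ref{lemma KS}'', but that lemma gives $\nabla^{\varphi^*g_2}_v w=b^{-1}\nabla^{g_1}_v(bw)$ only \emph{under the hypothesis} $d^{\nabla^{g_1}}b=0$; using it inside the computation of $H$ is circular for the direction ``graph minimal $\Rightarrow$ $b$ Codazzi''. Without that hypothesis, the operator $D_vw:=b^{-1}\nabla^{g_1}_v(bw)$ is compatible with $\varphi^*g_2$ but has torsion $b^{-1}\bigl(d^{\nabla^{g_1}}b(v,w)\bigr)$, so the Levi-Civita connection of $\varphi^*g_2$ differs from $D$ by a contorsion term --- and it is precisely this term that produces the asserted proportionality between the normal components of $H$ and the components of $d^{\nabla^{g_1}}b(e_1,e_2)$. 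As written, that key identity is stated rather than derived, and the tool you name for deriving it is unavailable at that point. The frame computation can be completed (it is classical), but you must either carry the torsion correction explicitly or work directly with the two connection forms $\omega_1,\omega_2$ and the derivatives of $\lambda$, checking by hand that the $N_1,N_2$-components of $H$ are, up to nonzero factors, the combinations $\partial_{e_i}\lambda_j$ and $(\lambda_1-\lambda_2)\omega_1(e_i)$ that constitute $d^{\nabla^{g_1}}b(e_1,e_2)$ in the eigenframe.

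For comparison, the paper sidesteps the mean curvature computation entirely: it uses that an immersion into a Riemannian product is minimal iff it is conformal and harmonic, shows via Cayley--Hamilton that $g_1\oplus\varphi^*g_2$ is conformal to $G=(1/4)\,g_1((\mathbbm 1+b)\cdot,(\mathbbm 1+b)\cdot)$, identifies the Hopf differentials of the two projections $\mathrm{id}$ and $\varphi$ as $\mp q$ with $\mathrm{Re}(q)=G(B\cdot,\cdot)$, and then concludes from the equivalences ``harmonic $\Leftrightarrow$ holomorphic Hopf differential'' (for diffeomorphisms) and ``$q$ holomorphic $\Leftrightarrow$ $B$ is $G$-Codazzi $\Leftrightarrow$ $b$ is $g_1$-Codazzi'' (Proposition \ref{prop hopf} together with the argument of Proposition \ref{prop GCS3}). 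That route trades your frame bookkeeping for the conformal--harmonic characterization of minimality and handles both implications symmetrically, which is exactly the point where your sketch currently breaks.
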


The proof uses the following well-known characterization of Codazzi tensors, which is also applied in Appendix \ref{app:alternative}:

\begin{prop}[{\cite{zbMATH03064601}}] \label{prop hopf}
Given a Riemannian metric $G$ on a surface $\Sigma$ and a smooth $(1,1)$ tensor $B$, $B$ is $G$-self-adjoint and traceless if and only if $G(B\cdot,\cdot)$ is the real part of a quadratic differential $q$. Moreover $B$ is $G$-Codazzi if and only if $q$ is holomorphic.
\end{prop}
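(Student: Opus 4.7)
The plan is to work locally in an isothermal coordinate $z=x+iy$, so that $G=e^{2\lambda}|dz|^2$, and to express both the algebraic conditions on $B$ and the Codazzi equation in the complex frame $\{\partial_z,\partial_{\bar z}\}$, where the statements become transparent. Global patching will then be automatic from the tensorial transformation law of quadratic differentials under holomorphic change of coordinate.

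For the algebraic half, I would write $B$ as a $2\times2$ matrix in the basis $\{\partial_x,\partial_y\}$: $G$-self-adjointness forces the matrix of $G(B\cdot,\cdot)$ to be symmetric, while tracelessness forces the diagonal of $B$ to sum to zero, so $B$ must take the form $\begin{pmatrix}a & b\\ b & -a\end{pmatrix}$. Setting $f:=a-ib$, a short computation shows that in the complex frame $B\partial_z=f\partial_{\bar z}$ and $B\partial_{\bar z}=\bar f\partial_z$, and that $G(B\cdot,\cdot)$ equals the real part of the symmetric $(0,2)$-tensor $e^{2\lambda}f\,dz\otimes dz$. Hence $q:=e^{2\lambda}f\,dz^2$ realizes $G(B\cdot,\cdot)=\mathrm{Re}(q)$; conversely, every quadratic differential has a real part of exactly this shape, so the two algebraic conditions on $B$ are precisely what is needed for $G(B\cdot,\cdot)$ to be the real part of some $q$.

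For the Codazzi half, I would compute $d^{\nabla^G}B$ in the complex frame using the standard fact that the only nonzero Christoffel symbols of $G$ in isothermal coordinates are $\Gamma^z_{zz}=2\partial_z\lambda$ and its conjugate, yielding $\nabla_{\partial_z}\partial_{\bar z}=0$, $\nabla_{\partial_z}\partial_z=2\lambda_z\partial_z$, and $\nabla_{\partial_{\bar z}}\partial_{\bar z}=2\lambda_{\bar z}\partial_{\bar z}$. Expanding $(\nabla_{\partial_z}B)(\partial_{\bar z})-(\nabla_{\partial_{\bar z}}B)(\partial_z)$ using $B\partial_z=f\partial_{\bar z}$ and $B\partial_{\bar z}=\bar f\partial_z$ produces a component along $\partial_z$ proportional to $\partial_z(e^{2\lambda}\bar f)$ and a component along $\partial_{\bar z}$ proportional to $\partial_{\bar z}(e^{2\lambda}f)$. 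Since on a surface the Codazzi condition reduces to this single (complex) equation, and since the two scalar conditions that arise are complex conjugates of each other, $d^{\nabla^G}B=0$ is equivalent to $\partial_{\bar z}(e^{2\lambda}f)=0$, which is exactly the holomorphicity of $q=e^{2\lambda}f\,dz^2$.

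Global patching is then routine: under a holomorphic change of isothermal coordinate $w=w(z)$, the coefficient $e^{2\lambda}f$ transforms by the square of the Jacobian, so the locally defined $q$'s assemble into a genuine quadratic differential on $\Sigma$, and $\bar\partial$-holomorphicity is preserved since the transition functions are holomorphic. I do not expect any serious obstacle; the one piece of care required is bookkeeping of the conformal factor $e^{2\lambda}$, so that the same quadratic differential whose real part recovers $G(B\cdot,\cdot)$ is the one whose holomorphicity encodes the Codazzi condition.
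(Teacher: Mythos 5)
Your proof is correct, and the computations check out: with $B=\begin{pmatrix}a&b\\ b&-a\end{pmatrix}$ one indeed gets $B\partial_z=f\partial_{\bar z}$ for $f=a-ib$, $G(B\cdot,\cdot)=\mathrm{Re}\bigl(e^{2\lambda}f\,dz^2\bigr)$, and the single component $\partial_{\bar z}(e^{2\lambda}f)=0$ of $d^{\nabla^G}\!B(\partial_z,\partial_{\bar z})=0$ is exactly holomorphicity of $q$. The paper itself gives no proof of this statement --- it is quoted as a classical fact due to Hopf --- but the local normal form you derive is precisely the matrix identity $B_0=G_0^{-1}\mathrm{Re}(q)$ that the paper uses in Appendix B, so your argument is the standard one implicitly underlying the text.
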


\begin{proof}[Proof of Proposition \ref{prop:app1}]
First, let us observe that the area forms $dA_{g_1}$ and $\varphi^*dA_{g_2}$ differ by the factor $\det b$, hence the graph of $\varphi$ is Lagrangian (i.e. $\varphi$ is area-preserving) if and only if $\det b=1$. In the rest of the proof, we will always assume that both these conditions hold. 

It is known (\cite[\S 4]{zbMATH03198580}) that an immersion $F:U\hookrightarrow U_1\times U_2$ is minimal if and only if it is conformal and harmonic, that is, if and only if it is harmonic with respect to the conformal class of $F^*(g_1\oplus g_2)$ on $U$. Applying this to the graphical immersion $x\mapsto (x,\varphi(x))$, minimality of the graph of $\varphi$ is equivalent to harmonicity of $\mathrm{id}:(U_1,g_1\oplus \varphi^*g_2)\to (U_1,g_1)$ and of $\varphi:(U_1,g_1\oplus \varphi^*g_2)\to (U_2,g_2)$. 

So let us fix $\varphi$, take $b$ as in the statement, and assume moreover $\det b=1$. As in Definition \ref{defi GB}, we introduce the metric $G=(1/4)g_1((\mathbbm 1+b)\cdot,(\mathbbm 1+b)\cdot)$ and the $(1,1)$ tensor $B=(\mathbbm 1+b)^{-1}(b-\mathbbm 1)$. One sees immediately that $B$ is $G$-self-adjoint and traceless. 

It turns out that the metric $G$ is conformal to $g_1\oplus\varphi^*g_2$. Indeed  $b^2-\mathrm{tr}(b)b+(\det b)\mathbbm 1=0$ by the Cayley-Hamilton theorem, hence $\mathbbm 1+b^2=\mathrm{tr}(b)b$, which implies $g_1+\varphi^*g_2=\mathrm{tr}(b)g_1(b\cdot,\cdot)$ and
$$G=\frac{1}{4}\left(1+\frac{2}{\mathrm{tr}(b)}\right)(g_1+\varphi^*g_2)~.$$
Since harmonicity only depends on the conformal class on the source, we will conclude the proof by showing that $d^{\nabla^{g_1}}\!b=0$ if and only if $\mathrm{id}:(U_1,G)\to (U_1,g_1)$ and $\varphi:(U_1,G)\to (U_2,g_2)$ are harmonic.

As a last preliminary step, a direct computation shows 
\begin{equation}\label{eq:g1andG} 
g_1=G((\mathbbm 1-B)\cdot,(\mathbbm 1-B)\cdot)
\end{equation}
for $B=(\mathbbm 1+b)^{-1}(b-\mathbbm 1)$ as in Definition \ref{defi GB}. Using again Cayley-Hamilton, since $B$ is traceless, $B^2=-(\det B)\mathbbm 1$, and it follows that 
$$g_1=\left(1-\det B\right)G-2G(B\cdot,\cdot)~.$$
Since $B$ is $G$-self-adjoint and traceless, by Proposition \ref{prop hopf} $G(B\cdot,\cdot)$  is the real part of a quadratic differential $q$. Then by definition the Hopf differential of $\mathrm{id}:(U_1,G)\to (U_1,g_1)$ is $-q$. Similarly, one writes 
$$\varphi^*g_2=G((\mathbbm 1+B)\cdot,(\mathbbm 1+B)\cdot)=\left(1-\det B\right)G+2G(B\cdot,\cdot)~,$$
hence the Hopf differential of $\varphi:(U_1,G)\to (U_2,g_2)$ equals $q$.

Now, Proposition \ref{prop GCS3} shows that if $d^{\nabla^{g_1}}\!b=0$, then $d^{\nabla^{G}}\!B=0$, which by Proposition \ref{prop hopf} is equivalent to $q$ being holomorphic, and therefore that $\mathrm{id}:(U_1,G)\to (U_1,g_1)$ and $\varphi:(U_1,G)\to (U_2,g_2)$ are harmonic (since $\mathrm{id}$ and $\varphi$ are diffeomorphisms, see \cite[\S 9]{zbMATH03608406}). 

Conversely, suppose that $\mathrm{id}:(U_1,G)\to (U_1,g_1)$ is harmonic, hence $q$ is holomorphic and thus by Proposition \ref{prop hopf} $B$ satisfies the Codazzi condition $d^{\nabla^{G}}\!B=0$. Using \eqref{eq:g1andG} and observing that $b=(\mathbbm 1-B)^{-1}(\mathbbm 1+B)$, one repeats exactly the same proof as in Proposition \ref{prop GCS3} to show that $d^{\nabla^{g_1}}\!b=0$. This concludes the desired equivalence.
\end{proof}

\section{Alternative proof of Lemma \ref{lemma laplace}} 
\label{app:alternative}

In this appendix we provide a new proof of Lemma \ref{lemma laplace}. Denote   $\chi=(1/4)\log(-\det B)$, which is defined in the complement of zeros of $B$. We will show that 
\begin{equation}\label{eq:curvature laplacian}
K_G=\Delta^G\chi~.
\end{equation}

\begin{proof}[New proof of Lemma \ref{lemma laplace}]
Let $q$ be as in Proposition \ref{prop hopf}, and assume that $q$ does not vanish identically. Since \eqref{eq:curvature laplacian} is a local statement, we will work on a local isothermal coordinate $z=x+iy$. Let us first prove the formula for a flat metric $G_0=|dz|^2$ and a $G_0$-Codazzi tensor $B_0$. In this case we have:
$$B_0=G_0^{-1}\mathrm{Re}(q)=\begin{pmatrix}\mathrm{Re}(\phi) & -\mathrm{Im}(\phi) \\  -\mathrm{Im}(\phi)&-\mathrm{Re}(\phi) \end{pmatrix}~.$$
where $q=\phi(z)dz^2$ is the holomorphic quadratic differential as in Proposition \ref{prop hopf}. Hence
$$\chi_0=\frac{1}{4}\log(-\det B_0)=\frac{1}{4}\log(|\phi|^2)~.$$
To show \eqref{eq:curvature laplacian} in this case, it suffices to observe that $\chi_0$ is harmonic with respect to $G_0$. Indeed,
$$\partial_z\chi_0=\frac{1}{4}\partial_z\log(\phi\overline\phi)=\frac{1}{4\phi\overline\phi}\overline\phi\partial_z\phi=\frac{\partial_z\phi}{4\phi}~.$$
Hence $\partial_{\overline z}\partial_z\chi_0=0$ and $\chi_0$ is harmonic.

For the general case, given a Riemannian metric $G=e^{2f}|dz|^2$ and a $G$-Codazzi tensor $B$, as a consequence of Proposition \ref{prop hopf} we have that $B_0:=e^{2f}B$ is Codazzi with respect to the flat metric $G_0=e^{-2f}G=|dz|^2$. Observe that $\det B_0=e^{4f}\det B$, hence
$\chi_0=(1/4)\log(-\det B_0)=\chi+f$, and by the flat case discussed in the first part of the proof, 
$$\Delta^{G_0}\chi_0=\Delta^{G_0}\chi+\Delta^{G_0}f=0~.$$
 Using the formula for the curvature and Laplacian of a conformal metric, we have
$$\Delta^{G}\chi=e^{-2f}\Delta^{G_0}\chi=-e^{-2f}\Delta^{G_0}f=K_G$$
as claimed.
\end{proof}

\bibliographystyle{alpha}
\bibliographystyle{ieeetr}
\bibliography{eesbiblio.bib}

\end{document}